\documentclass[11pt]{amsart}

\usepackage[margin=1.15in]{geometry}
\usepackage{amsmath,amssymb,amsthm}
\usepackage{xcolor}
\usepackage[colorlinks=true,linkcolor=blue,citecolor=blue,urlcolor=blue]{hyperref}

\newtheorem{theorem}{Theorem}[section]
\newtheorem{lemma}[theorem]{Lemma}
\newtheorem{proposition}[theorem]{Proposition}

\newcommand{\R}{\mathbb R}
\newcommand{\dd}{\,d}
\newcommand{\loc}{\mathrm{loc}}

\title[Optimal regularity at the free boundary in mean field games]{Optimal regularity at the free boundary in one-dimensional first-order mean field games}
\author{Sebastian Munoz}
\address{Department of Mathematics, University of California, Los Angeles}
\email{sebastian@math.ucla.edu}
\subjclass[2020]{35Q89, 35R35, 35B65, 35J70, 49N80}
\keywords{First-order mean field games; free boundary regularity; compactly supported solutions; power coupling; degenerate elliptic equations; Lagrangian coordinates; weighted Schauder estimates}
\date{}

\begin{document}

\begin{abstract}
We establish sharp regularity for the value function, the pressure, and the
free boundary in one-dimensional first-order mean field games with power
coupling and compactly supported density. Under a standard nondegeneracy assumption on the initial datum, the pressure
\(p=m^\theta\) is Lipschitz continuous, the value function \(u\) is
\(C^{1,1/2}\), and the two free boundary curves are smooth in time. If the
initial pressure is smooth, then both
\(p\) and \(u\) are smooth up to the free boundary from inside the positive
phase. The proof works in Lagrangian coordinates and, through a singular change of
variables, recasts the boundary degeneracy as a removable radial axis in effective dimension \(N=4+2/\theta\), allowing the application of recent estimates for even solutions to elliptic problems with degenerate weights.
\end{abstract}

\maketitle

\section{Introduction}

For one-dimensional first-order mean field games (MFG) with compactly supported density, we obtain the optimal regularity at the free boundary, settling the questions left open in \cite{CMP}.

We consider the system
\begin{equation}
\label{eq:mfg-system}
\begin{cases}
        -u_t+\frac12 u_x^2=m^\theta
                &\text{in }\R\times(0,T),\\
        m_t-(mu_x)_x=0
                &\text{in }\R\times(0,T),\\
        m(\cdot,0)=m_0,
\end{cases}
\end{equation}
where \(\theta>0\). We impose one of two terminal conditions. In the
terminal-cost case,
\begin{equation}
\label{ass:mfg-terminal}
        u(\cdot,T)=g(m(\cdot,T)),\qquad
        g(s)=c_1s^\theta,\qquad c_1\ge0 .
\end{equation}
In the planning case,
\begin{equation}
\label{ass:planning-terminal}
        m(\cdot,T)=m_T .
\end{equation}
The unknown \(m\) is the density of agents and \(u\) is the value function.
The system was introduced by Lasry and Lions in the theory of MFGs
\cite{LL06,LL07}. The planning version is also the Euler--Lagrange system for
a dynamic optimal transport problem with congestion, in the spirit of the
Benamou--Brenier formulation \cite{BB00}; see, for instance,
\cite{GMS19,LS18,OPS19}.

Regularity of \eqref{eq:mfg-system} in the everywhere-positive regime has been
developed through a degenerate elliptic reformulation due to Lions
\cite{Lions}; see \cite{Munoz22,MSM24,Porretta23}. The present paper concerns
the complementary regime, in which the density is compactly supported and a
free boundary appears. We refer to \cite{CP20} for a general introduction to
MFGs.

Write
\[
        m_0\ge0,\qquad \{m_0>0\}=(a,b),\qquad p_0:=m_0^\theta,\qquad
        p:=m^\theta.
\]
The Lagrangian flow is the characteristic map from the initial coordinate
\(y\in(a,b)\),
\[
        \gamma(y,0)=y,\qquad
        \gamma_t(y,t)=-u_x(\gamma(y,t),t).
\]
Under the assumptions stated below, \cite{CMP} shows that solutions have
finite speed of propagation and that the positive set is transported by
\(\gamma\):
\[
        \{m>0\}
        =\{(x,t):\gamma_L(t)<x<\gamma_R(t)\},\qquad
        \gamma_L(t):=\gamma(a,t),\quad
        \gamma_R(t):=\gamma(b,t).
\]
Moreover, \(\gamma_L\) and \(\gamma_R\) are \(C^{1,1}\), are strictly convex
toward the positive phase, and \(p\), \(u_x\), \(u_t\) are H\"older
continuous up to the free boundary.

The natural benchmark for sharpness is the explicit
self-similar solution of \eqref{eq:mfg-system} whose density has the form
\begin{equation*}
        m(x,t)=t^{-\nu}
        \left(R-\frac{\nu(1-\nu)}2\frac{x^2}{t^{2\nu}}
        \right)_+^{1/\theta},
        \qquad \nu=\frac2{2+\theta}.
\end{equation*}
This is the MFG analogue of the Barenblatt profile of the porous medium
equation (PME) \cite{Barenblatt,Vazquez}. For this solution the pressure
\(p=m^\theta\) vanishes linearly at the edge of the support, so its zero
extension is Lipschitz but not \(C^1\) across the free boundary, no matter how
smooth the initial datum. Consequently, the Eulerian pressure cannot be more
regular than \(W^{1,\infty}\) across the free boundary, and the computations in
\cite[App.~A]{CMP} show that the corresponding value function for this solution
is \(C^{1,1/2}\) but not \(C^{1,\gamma}\) across the free boundary for any
\(\gamma>1/2\).

Theorem
\ref{thm:higher-boundary-regularity} below shows that the space-time
Lipschitz bound for \(p\) and the \(C^{1,1/2}\) bound for \(u\) are sharp
and always attained. The free
boundary curves are also smooth in time, under the basic hypotheses on \(p_0\).
Moreover, the one-sided regularity of \(p\) from
the positive phase matches the regularity of the initial pressure \(p_0\),
while the value function gains one derivative.

The analogy with the PME extends beyond the self-similar profile: the
same linear vanishing of the pressure and the same role of a nondegeneracy
assumption on the initial pressure drive the classical free boundary
theory of the PME (see, e.g., \cite{CF79,DH98,KKV18,Vazquez}). Theorem
\ref{thm:higher-boundary-regularity} can be viewed as the MFG
counterpart of those results, in which the parabolic dissipation of the
PME is replaced by the Hamilton--Jacobi/continuity structure of
\eqref{eq:mfg-system}.

Let \(m_0\) have unit mass. We assume
\begin{equation}
\label{ass:initial-data}
        p_0\in C([a,b])\cap C^{1,\sigma}(a,b),\qquad
        p_0>0\text{ in }(a,b),\qquad
        p_0(a)=p_0(b)=0,
\end{equation}
for some \(\sigma\in(0,1)\), and, for some \(C_0,K_0,\delta>0\),
\begin{equation}
\label{ass:nondegenerate}
        C_0^{-1}\operatorname{dist}(x,\{a,b\})
        \le p_0(x)\le
        C_0\operatorname{dist}(x,\{a,b\}),
        \qquad x\in(a,b),
\end{equation}
\begin{equation}
\label{ass:h-second}
        p_0''\ge -K_0\text{ in }(a,b),\qquad
        p_0''\le0\text{ in }(a,a+\delta)\cup(b-\delta,b),
\end{equation}
in the distributional sense. In the planning problem we further assume
\begin{equation}
\label{ass:planning-basic}
        m_T\ge0,\qquad \{m_T>0\}=(a_1,b_1),\qquad
        p_T:=m_T^\theta\in
        C([a_1,b_1])\cap C^{1,\sigma}(a_1,b_1),
\end{equation}
that \(m_T\) has unit mass, and
\begin{equation}
\label{ass:terminal-data}
        C_0^{-1}\operatorname{dist}(x,\{a_1,b_1\})
        \le p_T(x)\le
        C_0\operatorname{dist}(x,\{a_1,b_1\}),
        \qquad x\in(a_1,b_1).
\end{equation}
Throughout, by a solution \((u,m)\) to \eqref{eq:mfg-system} we mean a pair
such that \(u\in C^1(\R\times(0,T))\) with \(u_x\in L^\infty(\R\times(0,T))\)
satisfies the Hamilton--Jacobi equation pointwise, \(m\in C_b(\R\times[0,T])\)
satisfies the continuity equation in the distributional sense, and the
prescribed initial/terminal conditions hold. By
\cite[Thm.~4.3]{CMP}, the solution is unique in the terminal-cost case;
in the planning case, \(m\) is uniquely determined while \(u\) is unique
on \(\{m>0\}\) up to an additive constant. The regularity statements
below hold for all such solutions.

\begin{theorem}[Regularity of the value function, pressure, and free boundary]
\label{thm:higher-boundary-regularity}
Let \((u,m)\) be a solution of \eqref{eq:mfg-system} with either
\eqref{ass:mfg-terminal} or \eqref{ass:planning-terminal}. Assume
\eqref{ass:initial-data}--\eqref{ass:h-second}, and also
\eqref{ass:planning-basic}, \eqref{ass:terminal-data} in the planning case.
Let \(\gamma\) be the Lagrangian flow and set
\(\gamma_L:=\gamma(a,\cdot)\), \(\gamma_R:=\gamma(b,\cdot)\), so that
\(\{m>0\}=\{(x,t):\gamma_L(t)<x<\gamma_R(t)\}\). Set \(p:=m^\theta\). Then,
for every compact interval \(J\subset(0,T)\),
\[
\begin{gathered}
        \gamma_L,\gamma_R\in C^\infty_{\loc}(0,T),\\
        p\in W^{1,\infty}_{\loc}(\R\times(0,T))
          \cap C^{1,\sigma}\bigl(\overline{\{m>0\}}\cap(\R\times J)\bigr),\\
        u\in C^{1,1/2}_{\loc}(\R\times(0,T))
          \cap C^{2,\sigma}\bigl(\overline{\{m>0\}}\cap(\R\times J)\bigr).
\end{gathered}
\]
If, in addition, \(p_0\in C^{\ell,\sigma}([a,b])\) for some integer
\(\ell\ge2\), then, for every compact interval
\(J\subset(0,T)\),
\[
        p\in
        C^{\ell,\sigma}\bigl(\overline{\{m>0\}}\cap(\R\times J)\bigr),
        \qquad
        u\in
        C^{\ell+1,\sigma}\bigl(\overline{\{m>0\}}\cap(\R\times J)\bigr).
\]
In particular, if \(p_0\) is smooth on \([a,b]\), then \(p\) and \(u\) are
smooth up to the free boundary from the positive phase.
\end{theorem}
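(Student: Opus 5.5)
The plan is to pass to Lagrangian coordinates, where the free boundary is fixed, and derive a single degenerate elliptic equation for the flow. In the coordinates \((y,t)\in(a,b)\times(0,T)\) with \(\gamma_t=-u_x(\gamma,t)\), differentiating in time and using the Hamilton--Jacobi equation gives
\[
\gamma_{tt}=-\partial_x\bigl(m^\theta\bigr)(\gamma,t).
\]
Mass conservation gives \(m(\gamma(y,t),t)\,\gamma_y=m_0(y)\), hence
\[
p(\gamma,t)=p_0(y)\,\gamma_y^{-\theta}.
\]
Substituting, and using \(\partial_x=\gamma_y^{-1}\partial_y\), yields the quasilinear equation
\[
\gamma_{tt}+\gamma_y^{-1}\partial_y\bigl(p_0\gamma_y^{-\theta}\bigr)=0,
\qquad\text{equivalently}\qquad
p_0^{1/\theta}\gamma_{tt}+\partial_y\bigl(p_0^{1+1/\theta}\gamma_y^{-\theta}\bigr)\cdot\tfrac{\theta}{\theta+1}\cdots=0 .
\]
The second form, in divergence form with weight \(m_0=p_0^{1/\theta}\), is to be checked. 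This equation is elliptic in \((y,t)\) as long as \(\gamma_y\) stays bounded above and below. The results of \cite{CMP} give \(C^{1,1}\) free boundaries, finite speed of propagation, and H\"older bounds. From these I would first extract the a priori bounds \(0<c\le\gamma_y\le C\) and \(\gamma\in C^{1,\alpha}\) up to \(y=a,b\) on \([a,b]\times J\). This uses \eqref{ass:nondegenerate} to convert \(p\sim\operatorname{dist}\) into \(p_0\gamma_y^{-\theta}\sim\operatorname{dist}\), together with the semiconcavity \eqref{ass:h-second} and the strict convexity of \(\gamma_{L,R}\).

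Near \(y=a\), set \(d=y-a\), so that \(p_0\approx c\,d\). Linearizing (or differentiating in \(t\), or working with difference quotients \(w=\gamma_t\)), the principal part becomes
\[
w_{tt}+\kappa\,\bigl(d\,w_{dd}+(1+\tfrac1\theta)w_d\bigr),
\]
with \(\kappa>0\) Hölder continuous. The singular change of variables \(d=r^2/4\) turns \(d\partial_{dd}+\beta\partial_d\) into \(\partial_{rr}+\frac{2\beta-1}{r}\partial_r\). The Bessel-type operator with \(2\beta-1=1+2/\theta\) is the radial Laplacian in effective dimension \(N=2\beta=2+2/\theta\) in the \(r\) variables. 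The abstract's \(N=4+2/\theta\) suggests the right unknown is one order higher, for example \(\gamma_y\) or \(\gamma_t\) divided by a power. I would choose the unknown for which the coefficient matches and view it as an even function of \(r\), that is, a radial function in \(\R^{N}\) times \(t\). The resulting equation is uniformly elliptic with Hölder coefficients away from a removable axis. I would then invoke the recent Schauder estimates for even solutions of elliptic equations with degenerate weights \(|r|^{N-1}\) to get \(C^{1,\sigma}\), and by iteration \(C^{\ell,\sigma}\), in \((r,t)\). The regularity of \(p_0\) enters through \(\kappa\) and the lower-order terms: \(p_0\in C^{1,\sigma}\) gives \(\kappa\in C^\sigma\), and \(p_0\in C^{\ell,\sigma}\) bootstraps.

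Finally, I would return to Eulerian variables. The curve \(\gamma_L(t)=\gamma(a,t)\) is then smooth in \(t\): tangential derivatives \(\partial_t^k\gamma\) satisfy the same linear equation, so they inherit the estimates with no loss from \(p_0\), which gives \(C^\infty_{\loc}\) in time. Smoothness of \(\gamma\) in even variables is smoothness in \(d=r^2/4\), up to the boundary. From \(p(\gamma,t)=p_0\gamma_y^{-\theta}\), \(u_x(\gamma,t)=-\gamma_t\), and \(u_t=\frac12u_x^2-p\), I would read off \(p\in C^{\ell,\sigma}\) and \(u\in C^{\ell+1,\sigma}\) up to the boundary from inside. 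The map \(y\mapsto\gamma\) is a \(C^{\ell,\sigma}\)-diffeomorphism since \(\gamma_y\ge c\). The global Eulerian statements then follow. Since \(p\) is Lipschitz inside and vanishes outside with \(p=0\) on the boundary, its zero extension is \(W^{1,\infty}\). For \(u\), outside the support the Hamilton--Jacobi equation is \(-u_t+\frac12u_x^2=0\). Along the boundary \(u_x\) matches, and \(u_x\) inside equals its boundary value plus \(O(\text{dist})\), while outside it is smooth. The \(C^{1,1/2}\) bound comes from comparing \(u_x\) across the curve at scale \(\sqrt{\text{dist}}\), as in the Barenblatt computation of \cite[App.~A]{CMP}. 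It arises because \(u_t\) involves \(p\), and time-slices across the convex curve meet the boundary with square-root geometry.

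The main obstacle is the middle step. One must identify the exact unknown and the change of variables that make the boundary degeneracy an exact radial Laplacian in dimension \(N=4+2/\theta\), with coefficients that are genuinely Hölder in \((r,t)\) and even in \(r\), not merely in \(d\). One must also justify that the quasilinear structure, via the a priori bounds on \(\gamma_y\), puts the problem within the hypotheses of the even-solution Schauder theory. I would first try the unknown \(v=\gamma_y\) (or \(\gamma_t\)) and check the drift coefficient against \(N-1=3+2/\theta\).
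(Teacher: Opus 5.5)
\textbf{The central step is not carried out, and the route you sketch for it is circular.} A sign first: differentiating $-u_t+\frac12u_x^2=m^\theta$ in $x$ gives $\gamma_{tt}=+\partial_x(m^\theta)(\gamma,t)$. With your sign the Lagrangian equation is a wave equation, not an elliptic one.

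For the unknown you actually computed, $w=\gamma_t$ with $N=2+2/\theta$, the equation is $\partial_t(m_0w_t)+\partial_y(\theta m_0p_0\gamma_y^{-\theta-2}w_y)=0$. Your $\kappa$ is therefore $\theta h\gamma_y^{-\theta-2}$, not a function of $p_0$ alone, and any Schauder step needs $\gamma_y\in C^\alpha$ up to $y=a$. What is imported from \cite{CMP} does not give this: it provides $c\le\gamma_y\le C$, interior $C^2$, and H\"older continuity of $p,u_x$ up to the free boundary. Recovering $\gamma_y$ from $p\circ\gamma=p_0\gamma_y^{-\theta}$ requires dividing by $p_0$, which vanishes at the endpoint. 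So ``$\gamma\in C^{1,\alpha}$ up to $y=a,b$'' is essentially the first thing to be proved, not an input.

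The missing idea is to take $Z=\gamma_y^{-(\theta+1)}$ as the unknown (your fallback $\gamma_y$, a smooth function of $Z$, would also do). Then $\gamma_{tt}=p_0'Z+c_\theta p_0Z_y$, and the compatibility $\partial_y\gamma_{tt}=\partial_t\gamma_{yt}$ gives $\partial_t(\beta(Z)Z_t)+\partial_y(p_0'Z+c_\theta p_0Z_y)=0$. Here the nonlinearity enters only through the bounded factor $\beta(Z)$. Multiplying by $p_0^{1/c_\theta}$ absorbs the drift, and $y=r^2/4$ produces the weight $r^{N-1}$ with $N=4+2/\theta$.

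Two more steps are needed before \cite{STV2021} applies, and your sketch skips both:
\begin{itemize}
\item a capacity argument using $N>2$, which gives finite weighted energy up to $r=0$ and extends the weak formulation across the axis;
\item a H\"older estimate for degenerate weights with merely measurable coefficients \cite{Zamboni}, which makes $\beta(Z)$ H\"older in the first place.
\end{itemize}
Smoothness of the trace $Z(0,\cdot)$ then gives $\gamma_L\in C^\infty$ only through the acceleration formula $\gamma_L''=p_0'(0^+)Z(0,\cdot)$, which you also lack.

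\textbf{The finite-order and $C^{1,1/2}$ claims do not follow as stated.} Even-in-$r$ regularity is not regularity in $d=r^2/4$. An even $C^{k,\alpha}$ function of $r$ is in general only about $C^{(k+\alpha)/2}$ in $d$; for example $|r|^{1+\sigma}=(4d)^{(1+\sigma)/2}$. Since $p_0\in C^{\ell,\sigma}$ makes the radial coefficient $c_\theta h(r^2/4)$ only $C^{\ell-1,\sigma}$ in $r$, Schauder iteration in $(r,t)$ does not by itself yield $p\in C^{\ell,\sigma}$ and $u\in C^{\ell+1,\sigma}$. Whitney's theorem on even functions would rescue only the $C^\infty$ case.

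The paper instead returns to $y$ and treats $V=Z_y$ via the regular-singular equation $yV_y+b(y)V=F$ with $b(0)>1$. Boundedness of $Z$ excludes the $y^{-b(0)}$ mode, so $V$ inherits the regularity of $F$. This is iterated with a $2{:}1$ trade of tangential for normal derivatives, using the $t$-smoothness from the bootstrap. The top-order term $y\,\partial_y^{n+1}V$ is absorbed by $p_0=yh$. Even at the base level, the $1/\sqrt y$ factors from $\partial_y=(2/r)\partial_r$ must be absorbed this way, using $\widetilde Z_r(0,\cdot)=0$.

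Finally, your $C^{1,1/2}$ argument is heuristic, and ``outside it is smooth'' cannot hold up to the free boundary. Otherwise, combined with the one-sided $C^{2,\sigma}$ bound, $u_x$ would be Lipschitz across the free boundary, contradicting the sharpness of $C^{1,1/2}$ for the self-similar solution. The $\sqrt{\mathrm{dist}}$ modulus lives on the exterior side, where straight characteristics meet the strictly convex $\gamma_L$ nearly tangentially, and it must actually be proved. The paper obtains it from $p\in W^{1,\infty}_{\loc}$ via \cite[Thm.~4.23]{CMP}, which uses that $u$ is both a forward and a backward viscosity solution.
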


In the planning case, the higher-regularity hypothesis is placed on
\(p_0\); by the time reversal \((u,m)(x,t)\mapsto(-u(x,T-t),m(x,T-t))\),
which preserves \eqref{eq:mfg-system} and exchanges initial and terminal
data, the analogous conclusion holds with the burden shifted to \(p_T\).

By translation, we may and do assume \(a=0\) throughout the
proof, so the support of \(m_0\) is \((0,b)\); the analysis at the right
endpoint \(b\) is symmetric.
The proof works in Lagrangian coordinates, in which the free boundary is
fixed at \(\{0,b\}\). With \(y\) the Lagrangian variable, the flow
\(\gamma\) satisfies the space-time degenerate elliptic equation
\begin{equation}
\label{eq:intro-flow}
        \gamma_{tt}
        +\frac{\theta p_0}{\gamma_y^{2+\theta}}\gamma_{yy}
        =
        \frac{p_0'}{\gamma_y^{1+\theta}} .
\end{equation}
In terms of
\[
        Z:=\gamma_y^{-(\theta+1)},
\]
this becomes
\[
        \gamma_{tt}=p_0'Z+\frac{\theta}{\theta+1}p_0Z_y.
\]
Averaging this identity over a one-sided neighborhood of each endpoint, with the
(possibly singular) term \(p_0Z_y\) integrated by parts, formally yields the
acceleration formulas
\[
        \gamma_L''=p_0'(0^+)Z(0,\cdot),
        \qquad
        \gamma_R''=p_0'(b^-)Z(b,\cdot).
\]
The averaging becomes rigorous as soon as \(Z\) admits traces at the
endpoints, and the regularity of \(\gamma_L,\gamma_R\) is then controlled by
the boundary regularity of \(Z\).

The degeneracy in \eqref{eq:intro-flow} comes from the linear vanishing of
\(p_0\) at the endpoints. The first step is to absorb this degeneracy
into a radial weight in a higher effective dimension. Near \(0\), introduce the
square-root distance coordinate
\[
        y=\frac{r^2}{4}.
\]
The linear vanishing \(p_0(y)\simeq y\) becomes quadratic in \(r\), while
\(\partial_y=(2/r)\partial_r\). These two effects combine to convert the
endpoint degeneracy into a radial weight, rather than a degenerating
ellipticity constant. Setting \(\widetilde Z(r,t)=Z(r^2/4,t)\), one obtains, away from
\(r=0\), the divergence-form equation
\[
        \partial_t\!\left(W(r)\beta(\widetilde Z)\widetilde Z_t\right)
        +\partial_r\!\left(W(r)A(r)\widetilde Z_r\right)
        +W(r)D(r)\widetilde Z=0,
\]
where \(W(r)\simeq r^{N-1}\) with effective dimension
\begin{equation}
\label{eq:N-intro}
        N:=4+\frac2\theta,
\end{equation}
the coefficients \(A\)
and \(\beta(\widetilde Z)\) are uniformly positive and bounded, and \(D\in L^\infty\).
Since \(W\) matches the radial Jacobian in dimension \(N\), the endpoint
\(r=0\) plays the role of a radial axis. The condition \(N>2\) is crucial: the radial
axis then has zero capacity in the weighted Sobolev space, in the sense that
cutoffs vanishing near \(r=0\) have weighted energy that disappears as the
cutoff is removed, and the weak formulation, originally obtained for test
functions supported away from the axis, extends across \(r=0\) without
imposing any additional boundary condition. In this sense the axis is
removable: \(\widetilde Z\) extends by even reflection across \(r=0\) to a weak
solution of the same divergence-form equation on the full ball, with no
compatibility imposed at the axis. The original free boundary question
thus becomes one of interior regularity for an even solution of the
weighted equation. A degenerate
H\"older estimate \cite{Zamboni}, followed by a recent weighted
Schauder estimate for even solutions \cite{STV2021}, gives
\(C^{1,\tau}\) control up to the axis, for every
\(\tau\in(0,1)\).
Since the coefficients depending on \(p_0\) are independent of \(t\),
the Schauder estimate can be iterated on tangential \(t\)-derivatives.
Under the basic endpoint hypotheses,
this gives \(C^\infty\) regularity in \(t\) for the endpoint trace
\(Z(0,\cdot)\), hence for the boundary acceleration, and hence
\(\gamma_L,\gamma_R\in C^\infty\).

The remaining normal regularity comes from the regular-singular form of
the Lagrangian equation in the original variable \(y\). Writing
\(p_0(y)=y\,h(y)\) with \(h(0)>0\), the non-divergence form of
\eqref{eq:Z-div} carries a single power of \(y\) in front of the highest
\(y\)-derivative; setting \(V:=Z_y\) and dividing by \(c_\theta\,h(y)\),
the equation for \(V\) takes the regular-singular form
\[
        y V_y+b(y)V=F,\qquad
        b(0)=\frac{1+c_\theta}{c_\theta}>1 .
\]
The homogeneous singular solution \(y^{-b(0)}\) of this ODE is
non-integrable at \(0\), so a contribution of this mode in \(V\) would
force its primitive to be unbounded near \(0\); since \(Z\) is bounded,
the singular mode is absent and \(V\) is given by the Volterra integral
of the regular branch. Bounded solutions of the equation therefore
inherit the regularity of \(F\). When \(p_0\) is one-sided
\(C^{2,\sigma}\), this already supplies the first missing normal
derivative; under higher one-sided regularity of \(p_0\), differentiating
the equation in \(y\) and \(t\) and iterating yields successive normal
derivatives, each step requiring one more derivative of \(p_0\). At the
top order, only the weighted derivative \(y\,\partial_y^{n+1}V\) is
controlled, but this weight is precisely absorbed by the vanishing
factor of \(p_0(y)=y\,h(y)\) in the quantities to be transferred to
Eulerian coordinates, such as \(\gamma(y,t)-\gamma_L(t)\) and
\(p(\gamma(y,t),t)=p_0(y)\,Z(y,t)^{\theta/(\theta+1)}\).

The endpoint estimates from the regular-singular descent then transfer to
the pressure and the value function via standard formulas. The mass relation
\begin{equation}
\label{eq:mass-relation}
        p(\gamma(y,t),t)=p_0(y)\gamma_y(y,t)^{-\theta}
\end{equation}
gives the regularity of the pressure in Lagrangian coordinates. The
identity \(u_x\circ\gamma=-\gamma_t\), together with the extra tangential
regularity in the endpoint charts, gives one more derivative for the value
function. Finally, the change of variables
\(r\mapsto y\) produces one-sided endpoint charts for the flow with nonzero
one-sided derivative at the boundary; inverting these charts and composing
with the Lagrangian pressure and Lagrangian velocity transfers the estimates
to Eulerian coordinates, and the Hamilton--Jacobi equation then yields the
corresponding regularity of \(u\) itself.

The paper is organized as follows. Section \ref{sec:preliminaries} fixes the
Lagrangian formulation. Section \ref{sec:boundary-equation} derives the
weighted divergence equation \eqref{eq:radial-weak} via the square-root
substitution. Section \ref{sec:endpoint-regularity} establishes the endpoint
regularity, combining the radial-axis H\"older/Schauder bootstrap with a
regular-singular descent in Lagrangian coordinates. Section
\ref{sec:proof-main-theorem} proves Theorem
\ref{thm:higher-boundary-regularity}.

\section{Preliminaries and Lagrangian formulation}
\label{sec:preliminaries}

We work under the assumptions of Theorem
\ref{thm:higher-boundary-regularity}, with \(a=0\) by the convention
introduced above. From now on \(y\in(0,b)\) denotes the Lagrangian variable,
while \(x\) denotes the Eulerian space variable; derivatives of \(p_0\) with
respect to \(y\) are written \(p_0',p_0''\).

\begin{lemma}
\label{lem:h-boundary}
After decreasing \(\delta\), if necessary,
\begin{equation*}
        p_0\in C^{1,1}([0,\delta))\cap C^{1,1}((b-\delta,b]),
        \qquad p_0'(0^+)>0,\qquad p_0'(b^-)<0 .
\end{equation*}
\end{lemma}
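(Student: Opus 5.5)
The plan is to work near the endpoint \(0\); the endpoint \(b\) is handled the same way with the signs reversed. The assumptions give \(p_0\in C^{1,\sigma}(0,b)\) and, by \eqref{ass:h-second}, \(-K_0\le p_0''\le 0\) in the sense of distributions on \((0,\delta)\). Since the distributional second derivative lies in \(L^\infty(0,\delta)\), the function \(p_0'\) (which is continuous on \((0,\delta)\)) is Lipschitz there, with constant \(K_0\), and it is nonincreasing. A Lipschitz function on a bounded interval is uniformly continuous, so \(p_0'\) extends continuously to \(y=0\), and the limit
\[
        p_0'(0^+)=\lim_{y\downarrow0}p_0'(y)
\]
exists. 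It is also finite, being equal to \(\sup_{(0,\delta)}p_0'\) by monotonicity and bounded by \(p_0'(\delta/2)+K_0\delta/2\) by the Lipschitz bound.

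Next I will show that \(p_0\) is \(C^{1,1}\) up to \(0\). Since \(p_0\) is continuous on \([0,b]\) with \(p_0(0)=0\), the identity \(p_0(y)=p_0(\varepsilon)+\int_\varepsilon^y p_0'\) holds for small \(\varepsilon\), and letting \(\varepsilon\to0\) gives
\[
        p_0(y)=\int_0^y p_0'(s)\,ds .
\]
This shows that \(p_0\) is differentiable from the right at \(0\), with derivative \(p_0'(0^+)\). Together with the fact that \(p_0'\) is Lipschitz on \((0,\delta)\), this gives \(p_0\in C^{1,1}([0,\delta))\).

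For positivity I will use \eqref{ass:nondegenerate}. For \(y<\min(\delta,b/2)\) it gives \(p_0(y)\ge C_0^{-1}y\). Because \(p_0'\) is nonincreasing on \((0,\delta)\), we have \(p_0(y)=\int_0^y p_0'\le y\,p_0'(0^+)\). Combining the two bounds yields \(p_0'(0^+)\ge C_0^{-1}>0\). At \(b\) the same argument applies to \(\tilde p(y)=p_0(b-y)\), which satisfies the same hypotheses, and gives \(p_0'(b^-)\le -C_0^{-1}<0\).

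The only point that needs some care is the passage from the distributional bound to the statement that \(p_0'\) is Lipschitz. It is standard: a distribution on an interval whose derivative is in \(L^\infty\) coincides with a Lipschitz function. Here \(p_0'\) is already continuous, so it is that Lipschitz function. The reduction of \(\delta\) is needed only so that \(\delta\le b/2\), which makes \(\operatorname{dist}(y,\{0,b\})=y\) on \((0,\delta)\).
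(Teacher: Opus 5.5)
Your proof is correct and follows the paper's argument. The distributional bound \(-K_0\le p_0''\le 0\) makes \(p_0'\) Lipschitz near each endpoint, so the one-sided derivatives exist, and the nondegeneracy condition \eqref{ass:nondegenerate} forces them to be nonzero. The only difference is minor: the paper argues by contradiction (\(p_0'(0^+)=0\) would give \(p_0(y)\le Cy^2\)) and reads the sign off \eqref{ass:initial-data}, whereas you get the quantitative bound \(p_0'(0^+)\ge C_0^{-1}\) directly. That bound in fact follows from \(p_0'(0^+)=\lim_{y\downarrow0}p_0(y)/y\) even without the monotonicity of \(p_0'\).
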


\begin{proof}
On \((0,\delta)\) and \((b-\delta,b)\), \eqref{ass:h-second} gives
\(-K_0\le p_0''\le0\) distributionally. Thus \(p_0''\) is represented by an
\(L^\infty\) function on these one-sided neighborhoods, and \(p_0'\) is Lipschitz there.
The one-sided derivatives at \(0,b\) therefore exist. If \(p_0'(0^+)=0\), then
the Lipschitz bound on \(p_0'\) gives \(p_0(y)\le Cy^2\), contrary to
\eqref{ass:nondegenerate}. The same argument excludes \(p_0'(b^-)=0\), and
\eqref{ass:initial-data} fixes the signs.
\end{proof}

For the analysis at the left endpoint we set
\begin{equation}
\label{eq:h-def}
        h(y):=\frac{p_0(y)}{y}\quad(y\in(0,\delta)),\qquad
        h(0):=p_0'(0^+),
\end{equation}
so that \(p_0(y)=y\,h(y)\) on \([0,\delta)\). By
Lemma~\ref{lem:h-boundary}, \(h\in C^{0,1}([0,\delta))\) and \(h(0)>0\); if
in addition \(p_0\in C^{\ell,\sigma}([0,\delta])\) for some integer
\(\ell\ge2\) and some \(\sigma\in(0,1)\), then
\(h\in C^{\ell-1,\sigma}([0,\delta))\). The analogous function at \(b\) is
defined symmetrically, after replacing \(y\) by \(b-y\).

We use the following consequences of
\cite[Thms.~4.3 and~4.10, Cor.~4.11, Prop.~4.13]{CMP}.

\begin{theorem}[Free boundary estimates]
\label{thm:free-boundary-estimates}
Under \eqref{ass:initial-data}--\eqref{ass:h-second}, together with
\eqref{ass:mfg-terminal} in the terminal cost case and
\eqref{ass:planning-terminal}, \eqref{ass:planning-basic},
\eqref{ass:terminal-data} in the planning case, the
Lagrangian flow
\(\gamma:(0,b)\times(0,T)\to\R\) extends continuously to
\([0,b]\times(0,T)\). Moreover, there exist \(c,C>0\) such that
\[
        c\le \gamma_y(y,t)\le C
\]
for \((y,t)\in(0,b)\times[0,T]\);
\(\gamma\) is \(C^2_{\loc}((0,b)\times(0,T))\) and solves \eqref{eq:flow}
classically in \((0,b)\times(0,T)\).
Here \(c,C\) depend on \(T,\ T^{-1},\ \theta,\ \theta^{-1},\ C_0,\ K_0,\ \delta^{-1}\),
together with \(c_1\) in the terminal-cost case.
\end{theorem}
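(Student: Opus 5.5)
This statement is a repackaging of results from \cite{CMP}, so the plan is to translate the cited Eulerian results into Lagrangian ones rather than to prove anything new. Two sources are used.
\begin{itemize}
\item \cite[Thm.~4.3]{CMP} gives existence and uniqueness of the solution, together with finite speed of propagation.
\item \cite[Thm.~4.10, Cor.~4.11]{CMP} give that \(\{m>0\}\) is transported by \(\gamma\), that \(\gamma_L,\gamma_R\in C^{1,1}\), and that \(p,u_x,u_t\) are H\"older continuous up to the free boundary, with two-sided bounds on \(p\) in terms of the distance to the free boundary.
\end{itemize}
\cite[Prop.~4.13]{CMP} provides the Lagrangian description: the flow is a \(C^1\) diffeomorphism of \((0,b)\times(0,T)\) onto \(\{m>0\}\), and the density satisfies the mass relation \eqref{eq:mass-relation}.

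\textbf{Step 1: the Jacobian bounds.} Mass conservation along characteristics reads \(m(\gamma(y,t),t)\gamma_y(y,t)=m_0(y)\), i.e. \(\gamma_y=(p_0(y)/p(\gamma(y,t),t))^{1/\theta}\).
\begin{itemize}
\item By \eqref{ass:nondegenerate}, \(p_0(y)\) is comparable to \(\operatorname{dist}(y,\{0,b\})\).
\item The CMP estimates give that \(p(x,t)\) is comparable to \(\operatorname{dist}(x,\{\gamma_L(t),\gamma_R(t)\})\), uniformly for \(t\in[0,T]\). In the planning case this also uses \eqref{ass:terminal-data}.
\end{itemize}
Next I compare the two distances. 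Integrating \(\gamma_y\) from the endpoint gives \(\gamma(y,t)-\gamma_L(t)=\int_0^y\gamma_y\). A Gr\"onwall/bootstrap argument on this identity then shows that \(\operatorname{dist}(\gamma(y,t),\{\gamma_L,\gamma_R\})\) is comparable to \(\operatorname{dist}(y,\{0,b\})\). The bootstrap closes because the ratio \(p_0(y)/p(\gamma(y,t),t)\) is a ratio of comparable distances. This yields \(c\le\gamma_y\le C\). The continuous extension of \(\gamma\) to \([0,b]\times(0,T)\) follows from the same Lipschitz bound in \(y\).

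\textbf{Step 2: the flow equation.} In the interior, \(u\in C^2\) by the positive-density regularity, since \(m>0\) there. Differentiating \(\gamma_t=-u_x(\gamma,t)\) in time and using the Hamilton--Jacobi equation gives \(\gamma_{tt}=p_x(\gamma,t)\). Then \(p(\gamma,t)=p_0\gamma_y^{-\theta}\) and \(\partial_y[p(\gamma,t)]=p_x(\gamma,t)\gamma_y\) give
\[
\gamma_{tt}=\frac{p_0'}{\gamma_y^{1+\theta}}-\frac{\theta p_0\gamma_{yy}}{\gamma_y^{2+\theta}},
\]
which is \eqref{eq:intro-flow}, i.e. \eqref{eq:flow}. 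This holds classically, and \(\gamma\in C^2_{\loc}\) because \(u_x\in C^1_{\loc}\) inside the positive phase.

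\textbf{Main obstacle.} The delicate point is the uniformity of the bounds up to \(t=0\) and \(t=T\). The paper asserts them on \([0,T]\), while the H\"older estimates in CMP may be only local in time. I would handle this by using the explicit dependence of the CMP constants on the stated parameters, in particular the semiconcavity bound \(K_0\) and \(\delta^{-1}\). The convexity of the free boundary curves gives a quantitative nondegeneracy of \(p\) near the boundary that persists up to the initial time, and I would check this separately at \(t=0\) using \(\gamma(y,0)=y\), so that \(\gamma_y(\cdot,0)=1\).
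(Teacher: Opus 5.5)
Your overall stance matches the paper's. Theorem~\ref{thm:free-boundary-estimates} is not proved there; it is imported, already in Lagrangian form, from \cite[Thms.~4.3 and~4.10, Cor.~4.11, Prop.~4.13]{CMP}. That includes the bound \(c\le\gamma_y\le C\) on all of \((0,b)\times[0,T]\), with the stated dependence of the constants. Your Step~2 correctly derives \eqref{eq:flow} from interior regularity. The comparison in Step~1 also closes: near \(y=0\), \(f:=\gamma(\cdot,t)-\gamma_L(t)\) satisfies \(f'f^{1/\theta}\simeq y^{1/\theta}\) with \(f(0)=0\), hence \(f\simeq y\) and \(\gamma_y\simeq1\).

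But notice what Step~1 is. By \eqref{eq:mass-relation} and \eqref{ass:nondegenerate}, a two-sided bound \(p(\cdot,t)\simeq\operatorname{dist}(\cdot,\{\gamma_L(t),\gamma_R(t)\})\), uniform for \(t\in[0,T]\), is \emph{equivalent} to the Jacobian bound on \((0,b)\times[0,T]\). So Step~1 proves nothing beyond what is cited: you are citing the conclusion in Eulerian form. Tellingly, the hypotheses \eqref{ass:h-second} never enter your argument, although \(c,C\) depend on them through \(K_0\) and \(\delta^{-1}\).

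The genuine gap is the one you flag yourself, and the repair you propose does not work. Suppose the Eulerian nondegeneracy you attribute to \cite{CMP} were only local in time. Convexity and \(C^{1,1}\) of \(\gamma_L,\gamma_R\) could not restore uniformity:
\begin{itemize}
\item By the acceleration formula \eqref{eq:left-acceleration}, \(\gamma_L''=p_0'(0^+)\gamma_y(0^+,\cdot)^{-(\theta+1)}\). So these properties only encode bounds on \(\gamma_y\) at \(y=0\) (and \(y=b\)), and say nothing about \(\gamma_y\) in the interior of \((0,b)\).
\item The paper justifies that formula only afterwards, using Theorem~\ref{thm:free-boundary-estimates} itself.
\item \(\gamma_y(\cdot,0)=1\) fixes the value at \(t=0\) but gives no uniformity as \(t\downarrow0\).
\item Nothing in your argument controls \(\gamma_y\) near \(t=T\). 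There the terminal condition has to be used: \eqref{ass:mfg-terminal} in the terminal-cost case, or \eqref{ass:terminal-data} through \(\gamma_y(y,T)=m_0(y)/m_T(\gamma(y,T))\) in the planning case.
\end{itemize}
Bounds that are uniform up to \(t=0,T\) and across \((0,b)\) require a global estimate for the degenerate elliptic problem \eqref{eq:flow} with these initial/terminal conditions, using \eqref{ass:h-second}. That is precisely what the paper takes from \cite{CMP}. The fix is to cite that Lagrangian estimate directly; Step~1 then becomes superfluous and the obstacle disappears.
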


Let \(\gamma\) be the Lagrangian flow. By Theorem
\ref{thm:free-boundary-estimates},
\begin{equation}
\label{eq:qy-bounds}
        0<c\le \gamma_y(y,t)\le C
\end{equation}
for \((y,t)\in(0,b)\times[0,T]\). Put \(\gamma_L:=\gamma(0,\cdot)\) and
\(\gamma_R:=\gamma(b,\cdot)\). Moreover \(\gamma\) solves
\begin{equation}
\label{eq:flow}
        \gamma_{tt}
        +\frac{\theta p_0}{\gamma_y^{2+\theta}}\gamma_{yy}
        =
        \frac{p_0'}{\gamma_y^{1+\theta}}
\end{equation}
classically in \((0,b)\times(0,T)\).

\section{The boundary equation}
\label{sec:boundary-equation}

Put
\begin{equation}
\label{eq:Z-def}
        Z:=\gamma_y^{-(\theta+1)},\qquad
        \beta(Z):=\frac1{\theta+1}Z^{-1-\frac1{\theta+1}},\qquad
        c_\theta:=\frac{\theta}{\theta+1} .
\end{equation}
Fix \(I\Subset(0,T)\). By \eqref{eq:qy-bounds}, \(\gamma_y\)---and hence
\(Z\) and \(\beta(Z)\)---is bounded above and below by positive constants
on the whole of \((0,b)\times[0,T]\). Since
\[
        Z_y=-(\theta+1)\gamma_y^{-(\theta+2)}\gamma_{yy},
\]
equation \eqref{eq:flow} is equivalent to
\begin{equation}
\label{eq:gamma-Z}
      \gamma_{tt}=p_0'Z+c_\theta p_0Z_y.
\end{equation}
By Theorem \ref{thm:free-boundary-estimates},
\(\gamma\in C^2((0,b)\times(0,T))\), so \(Z\in C^1((0,b)\times(0,T))\) and
the identity \(\gamma_{yt}=-\beta(Z)Z_t\) holds pointwise in
\((0,b)\times(0,T)\). Differentiating \eqref{eq:gamma-Z} in \(y\) and
commuting mixed derivatives gives
\begin{equation}
\label{eq:Z-div}
        \partial_t\!\left(\beta(Z)Z_t\right)
        +\partial_y\!\left(p_0'Z+c_\theta p_0Z_y\right)=0
\end{equation}
in the weak sense on \((0,b)\times I\).

We now localize near \(0\) and recast \eqref{eq:Z-div} in weighted divergence
form. The weight is chosen so that the first-order term \(p_0'Z\) is absorbed
into a divergence and so that, after the square-root substitution below, it
becomes the radial Jacobian in dimension \(N\). Multiply \eqref{eq:Z-div} by
\(p_0^{1/c_\theta}\). Throughout the one-sided neighborhood \((0,\delta)\times I\),
\(p_0>0\) and
\((p_0^{1/c_\theta})_y/p_0^{1/c_\theta}=c_\theta^{-1}p_0'/p_0\), giving, in
the weak sense, the divergence form equation
\begin{equation}
\label{eq:Z-weighted-y}
        \partial_t\!\left(p_0^{1/c_\theta}\beta(Z)Z_t\right)
        +\partial_y\!\left(c_\theta p_0^{1+1/c_\theta}Z_y\right)
        +p_0^{1/c_\theta}p_0''Z=0 .
\end{equation}
Let \(r_0>0\) be small. Set
\[
        y=\frac{r^2}{4},\qquad \widetilde Z(r,t):=Z\!\left(\frac{r^2}{4},t\right).
\]
Then \eqref{eq:Z-weighted-y} takes the form
\begin{equation}
\label{eq:radial-weak}
        \iint W(r)\Bigl[
        \beta(\widetilde Z)\widetilde Z_t\Phi_t+A(r)\widetilde Z_r\Phi_r-D(r)\widetilde Z\Phi
        \Bigr]\dd r\dd t=0 ,
\end{equation}
for every \(\Phi\in C_c^\infty((0,r_0)\times I)\), where
\begin{equation*}
        W(r):=\frac r2p_0\!\left(\frac{r^2}{4}\right)^{1/c_\theta},\qquad
        A(r):=\frac{4c_\theta p_0\!\left(\frac{r^2}{4}\right)}{r^2},\qquad
        D(r):=p_0''\!\left(\frac{r^2}{4}\right).
\end{equation*}
Moreover, \(\widetilde Z\in H^1_{\loc}((0,r_0)\times I)\); the
Caccioppoli step in the proof of Lemma~\ref{lem:radial-regularity} below
upgrades this to the weighted finite-energy bound
\(\widetilde Z\in H^1_{\loc}([0,r_0)\times I;\dd\mathfrak m)\) up to the
axis. By Lemma \ref{lem:h-boundary}, after decreasing \(r_0\) if
necessary,
\begin{equation}
\label{eq:radial-structure}
        W(r)\simeq r^{N-1},\qquad
        N:=4+\frac2\theta>2,\qquad
        0<c\le A(r)\le C,\qquad
        D\in L^\infty .
\end{equation}
The same reduction applies at \(b\), after replacing \(y\) by \(b-y\).

\section{Endpoint regularity}
\label{sec:endpoint-regularity}

In the absence of lower-order terms, H\"older regularity for
divergence-form elliptic equations with degenerate weights goes back to
the classical work of Fabes, Kenig, and Serapioni~\cite{FKS82}. We use the following
extension that allows a bounded zero-order coefficient, a consequence of
\cite[Thm.~5.2 and Lem.~5.1]{Zamboni}.

\begin{theorem}[Power-weight H\"older estimate]
\label{thm:zamboni}
Let \(\Omega\subset\R^{n+1}\), write \(z=(z',s)\in\R^n\times\R\), and let
\(-n<\tau<n\). Suppose that \(\omega\) is comparable on compact subsets of
\(\Omega\) to \(|z'|^\tau\), and let \(\mathcal A\) be measurable and symmetric
with
\[
        \lambda \omega|\xi|^2
        \le \mathcal A(z)\xi\cdot\xi
        \le \Lambda\omega|\xi|^2 .
\]
Let \(d\in L^\infty_{\loc}(\Omega)\). Every
\(u\in H^1_{\loc}(\Omega;\omega)\) satisfying
\[
        \int_\Omega \mathcal A\nabla u\cdot\nabla\phi
        +\int_\Omega \omega d u\phi=0,\qquad \phi\in C_c^\infty(\Omega),
\]
is locally H\"older continuous. The exponent and local estimate depend only on
\(n,\tau,\lambda,\Lambda\), the local comparability constants for \(\omega\),
the local \(L^\infty\) bound for \(d\), the compact subdomain, and a local size
of \(u\).
\end{theorem}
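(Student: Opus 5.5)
This theorem is stated as a consequence of \cite[Thm.~5.2 and Lem.~5.1]{Zamboni}, so I would prove it by reducing it to that source, in three steps.

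\emph{Step 1: match the hypotheses.} Zamboni treats equations $\operatorname{div}(\mathcal A\nabla u)+Vu=0$ in which $\mathcal A$ is degenerate-elliptic with respect to an $A_2$-type weight $\omega$, and $V$ lies in a weighted Stummel--Kato class relative to $\omega$. Here $\omega\simeq|z'|^\tau$ with $z'\in\R^n$ and $|\tau|<n$. Under this condition $|z'|^\tau$ is locally integrable in $\R^n$, and it is a Muckenhoupt $A_2$ weight on $\R^{n+1}$ exactly when $-n<\tau<n$. The standard computation uses $|z'|^{-\tau}$, which is integrable because $\tau<n$, and the fact that averages over balls centered on or off the axis $\{z'=0\}$ are comparable to $r^{\tau}$ or to $|z'|^\tau$, respectively. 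Local comparability of $\omega$ to $|z'|^\tau$ preserves the $A_2$ property on compact subsets, with constants depending on the comparability constants. This places us in the Fabes--Kenig--Serapioni framework \cite{FKS82}: we have the weighted Sobolev and Poincar\'e inequalities with some gain exponent $k>1$, and the doubling property of $\omega\,dz$.

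\emph{Step 2: the zero-order term.} The coefficient $\omega d$ with $d\in L^\infty_{\loc}$ has to belong to Zamboni's Kato-type class. Measured against the weighted measure $\omega\,dz$, its potential over a ball $B_r$ is bounded by roughly $\|d\|_\infty\int_{B_r}|z-w|^2/\omega(B_{|z-w|}(z))\,\omega(w)\,dw$. By doubling, $\omega(B_\rho)\gtrsim \rho^{\,n+1+\min(\tau,0)}$ (in the appropriate normalization), so this integral is $O(r^{2})$ times a constant and tends to zero as $r\to0$. This is the content of Lemma~5.1 of \cite{Zamboni}, which I would invoke after checking this bound.

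\emph{Step 3: conclude.} Theorem~5.2 of \cite{Zamboni} then gives, through Moser iteration and the weighted Harnack inequality, local boundedness and a H\"older modulus for $u$. The constants depend only on the $A_2$ constant of $\omega$ (hence on $n,\tau$ and the comparability constants), on $\lambda,\Lambda$, on the Kato norm (hence on the local $L^\infty$ bound of $d$), on the subdomain, and on a local norm of $u$.

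The main obstacle is verifying the Kato condition uniformly near the axis when $\tau<0$, since there the weight blows up. I would handle this with the doubling estimate above, splitting the integral dyadically in $|z-w|$.
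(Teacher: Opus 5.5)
Your reduction is correct and follows the paper's route: the paper gives no argument beyond citing Zamboni's Theorem~5.2 and Lemma~5.1. Your two checks are exactly what that citation implicitly relies on: that $|z'|^\tau$ is an $A_2$ weight on $\R^{n+1}$ precisely when $-n<\tau<n$ (with local comparability handled by modifying $\omega$ outside a compact set), and that $\omega d$ satisfies the weighted Kato condition with modulus $O(r^2)$.

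One small slip in Step 2: the uniform lower bound is $\omega(B_\rho)\gtrsim\rho^{n+1+\max(\tau,0)}$, not $\min$. For $\tau<0$, balls of radius $\rho$ centred at distance $d\gg\rho$ from the axis have mass $\simeq\rho^{n+1}d^{\tau}\ll\rho^{n+1+\tau}$. This does not affect your argument, because the dyadic estimate you mention needs no power-type lower bound, only doubling: each annulus $\{2^{-k-1}r\le|z-w|<2^{-k}r\}$ contributes at most $C(2^{-k}r)^2$. That bound is uniform in the centre, so there is no real obstacle at the axis when $\tau<0$.
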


The next lemma realizes the zero-capacity removability of the axis, together with the resulting H\"older
regularity up to it.

\begin{lemma}[Removability of the radial axis and boundary H\"older estimate]
\label{lem:radial-regularity}
Let \(I\) be an open interval, \(r_0>0\), and \(N>2\). Let
\begin{equation}
\label{ass:radial-weight}
        \dd\mathfrak m=W(r)\dd r\dd t,\qquad W(r)\simeq r^{N-1}
\end{equation}
on \((0,r_0)\). Suppose that
\(\widetilde Z\in L^\infty((0,r_0)\times I)\cap H^1_{\loc}((0,r_0)\times I)\) satisfies
\begin{equation}
\label{eq:radial-model}
        \iint W(r)\Bigl[
        B(r,t)\widetilde Z_t\Phi_t+A(r)\widetilde Z_r\Phi_r-D(r)\widetilde Z\Phi
        \Bigr]\dd r\dd t=0,
        \qquad \Phi\in C_c^\infty((0,r_0)\times I),
\end{equation}
where \(A,B,D\) are measurable and
\begin{equation}
\label{ass:radial-coefficients}
        0<\kappa\le A(r)\le K,\qquad
        0<\kappa\le B(r,t)\le K,
        \qquad D\in L^\infty((0,r_0)).
\end{equation}
Then \(\widetilde Z\) has a representative in
\(C^\alpha_{\loc}([0,r_0)\times I)\cap
H^1_{\loc}([0,r_0)\times I;\dd\mathfrak m)\), for some \(\alpha\in(0,1)\)
depending only on \(N,\kappa,K\), \(\|D\|_\infty\), and the comparability
constants in~\eqref{ass:radial-weight}; the local H\"older norm also depends
on \(\|\widetilde Z\|_\infty\) and the cylinder. Moreover, \eqref{eq:radial-model}
remains valid for test functions compactly supported in the relative
topology of \([0,r_0)\times I\).
\end{lemma}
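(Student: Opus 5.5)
The plan has three steps: prove a weighted Caccioppoli bound up to the axis, use $N>2$ to remove the axis via cutoffs, and then lift to $\R^N\times I$ as a radial function and apply Theorem~\ref{thm:zamboni}.

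\textbf{Step 1: energy bound up to the axis.} Fix a cylinder $Q=[0,\rho)\times I'$ with $\rho<r_0$ and $I'\Subset I$. Let $\eta(r,t)$ be a cutoff equal to $1$ on a smaller cylinder $[0,\rho')\times I''$; it need not vanish at $r=0$. Let $\chi_\epsilon(r)$ be an axis cutoff with $\chi_\epsilon=0$ on $[0,\epsilon]$, $\chi_\epsilon=1$ on $[2\epsilon,r_0)$ and $|\chi_\epsilon'|\le C/\epsilon$. Test \eqref{eq:radial-model} with $\Phi=\widetilde Z\eta^2\chi_\epsilon^2$, which is admissible by density since $\widetilde Z\in H^1_{\loc}$ away from the axis. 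The ellipticity bounds \eqref{ass:radial-coefficients} and Young's inequality give
\[
\iint W\,\eta^2\chi_\epsilon^2|\nabla\widetilde Z|^2
\le C\|\widetilde Z\|_\infty^2\iint W\bigl(|\nabla\eta|^2+\eta^2+|\chi_\epsilon'|^2\eta^2\bigr).
\]
The last term is bounded by
\[
C\epsilon^{-2}\int_\epsilon^{2\epsilon}r^{N-1}\,dr\simeq\epsilon^{N-2}\to0,
\]
because $N>2$. Letting $\epsilon\to0$ with monotone convergence yields $\widetilde Z\in H^1_{\loc}([0,r_0)\times I;\dd\mathfrak m)$, with a bound depending only on $\|\widetilde Z\|_\infty$, the structural constants and the cylinder.

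\textbf{Step 2: extension of the weak formulation.} Let $\Phi\in C_c^\infty([0,r_0)\times I)$, compactly supported in the relative topology. Insert $\Phi\chi_\epsilon$ into \eqref{eq:radial-model}. The terms with $\chi_\epsilon\nabla\Phi$ and $\chi_\epsilon\Phi$ converge by dominated convergence, using Step 1 and the boundedness of $\widetilde Z$. The extra term is controlled by Cauchy--Schwarz:
\[
\Bigl|\iint W A\widetilde Z_r\Phi\chi_\epsilon'\Bigr|
\le C\Bigl(\iint_{\epsilon<r<2\epsilon}W|\nabla\widetilde Z|^2\Bigr)^{1/2}\epsilon^{(N-2)/2}\to0.
\]
Hence \eqref{eq:radial-model} holds for such test functions.

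\textbf{Step 3: Hölder continuity via lifting.} Define $U(z',t):=\widetilde Z(|z'|,t)$ on $B_{r_0}\times I\subset\R^N\times\R$. Set $\omega(z')=W(|z'|)|z'|^{1-N}$, which is comparable to $1$ and so corresponds to the exponent $\tau=0$ in Theorem~\ref{thm:zamboni}. Take $\mathcal A$ diagonal, with entry $\omega B(|z'|,t)$ in the $t$ direction and $\omega A(|z'|)\,\mathrm{Id}$ in the $z'$ directions. Take $d=-D(|z'|)$.

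Any test function $\phi\in C_c^\infty(B_{r_0}\times I)$ can be replaced by its spherical average $\bar\phi(r,t)$. This leaves the integrals unchanged, since $U$ and the coefficients are radial and $\nabla U$ is radial in $z'$. Then $\bar\phi$ is an admissible test function in the sense of Step 2: it is Lipschitz and smooth up to $r=0$, and a density argument covers it. Polar coordinates convert the integral identity of Theorem~\ref{thm:zamboni} into \eqref{eq:radial-model} tested with $\bar\phi$, so $U$ is a weak solution. Step 1 gives $U\in H^1_{\loc}(B_{r_0}\times I)$, because the weighted measure $W\,dr\,dt$ is exactly the pushforward of Lebesgue measure on $\R^N\times\R$ up to constants.

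Theorem~\ref{thm:zamboni} now gives local Hölder continuity of $U$, with exponent depending on $N,\kappa,K,\|D\|_\infty$ and the comparability constants. This transfers to $\widetilde Z$ on $[0,r_0)\times I$. We could equally work in the $(n+1)=2$-dimensional half-space picture with $\omega\simeq|r|^{N-1}$ after even reflection, but $N-1\ge n=1$ violates $\tau<n$, so the lift to dimension $N$ is the right choice. When $N$ is not an integer, we would instead use $n=1$ with weight $|r|^{N-1}$, provided the range of Theorem~\ref{thm:zamboni} permits it.

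\textbf{Main obstacle.} The dimension bookkeeping in Step 3 is the delicate point. For non-integer $N=4+2/\theta$, a literal lift to $\R^N$ is unavailable. I would try first to lift to $\R^n$ with $n=\lfloor N\rfloor\ge 4$ and write $W\simeq r^{n-1}\cdot r^{N-n}$, which gives $\omega\simeq|z'|^{N-n}$ with $0\le N-n<1<n$. This is admissible in Theorem~\ref{thm:zamboni}. The capacity argument in Steps 1 and 2 then needs no change, because it used only $N>2$.
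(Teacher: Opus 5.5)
Your proposal is correct and follows the paper's proof essentially step for step. The shared steps are the weighted Caccioppoli bound with the axis cutoff $\chi_\varepsilon$ and the capacity estimate $\int_\varepsilon^{2\varepsilon}W|\chi_\varepsilon'|^2\lesssim\varepsilon^{N-2}$, removal of the axis from the weak formulation by Cauchy--Schwarz, and the radial lift with spherically averaged test functions followed by Theorem~\ref{thm:zamboni}. The only real difference is that you lift to $\R^{\lfloor N\rfloor}$ with $\tau=N-\lfloor N\rfloor\in[0,1)$, while the paper uses $n=\lceil N\rceil$ with $\tau\in(-1,0]$; both satisfy $-n<\tau<n$. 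You should simply delete the literal lift to $\R^N$ and the $n=1$ fallback, which you yourself correctly noted violates $\tau<n$.
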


\begin{proof}
Write \(M:=\|\widetilde Z\|_{L^\infty}\), \(L:=\|D\|_{L^\infty}\), and
\(\nabla=(\partial_r,\partial_t)\). For \(z_0=(r_1,t_1)\) set
\[
        Q_\rho^+(z_0):=([0,r_0)\times I)
        \cap\{|r-r_1|<\rho,\ |t-t_1|<\rho\}.
\]
It suffices to consider \(Q_{2R}^+(z_0)\Subset[0,r_0)\times I\).
Choose
\(\eta\in C_c^\infty(Q_{2R}^+(z_0))\) and
\(\chi_\varepsilon\) such that
\[
        \chi_\varepsilon=0\text{ on }(0,\varepsilon),\qquad
        \chi_\varepsilon=1\text{ on }(2\varepsilon,r_0),\qquad
        |\chi_\varepsilon'|\le C\varepsilon^{-1}.
\]
The estimate
\begin{equation}
\label{eq:axis-capacity}
        \int_\varepsilon^{2\varepsilon}W(r)|\chi_\varepsilon'(r)|^2\,dr
        \le C\varepsilon^{N-2}\to0
\end{equation}
uses only \(W\simeq r^{N-1}\) and \(N>2\).

Since \(\chi_\varepsilon\) vanishes near \(r=0\), the function
\(\widetilde Z\eta^2\chi_\varepsilon^2\) is an admissible test after smooth
approximation on \((\varepsilon,r_0)\times I\). Testing
\eqref{eq:radial-model}, using \eqref{ass:radial-coefficients}, Young's
inequality, and \(|\widetilde Z|\le M\), gives
\begin{align}
\label{eq:caccioppoli-eps}
        \iint W\eta^2\chi_\varepsilon^2|\nabla \widetilde Z|^2
        &\le
        C\iint W\widetilde Z^2\chi_\varepsilon^2|\nabla\eta|^2
        +C\iint W\widetilde Z^2\eta^2|\chi_\varepsilon'|^2       \notag\\
        &\quad
        +CL\iint W\widetilde Z^2\eta^2\chi_\varepsilon^2 .
\end{align}
The middle term tends to zero by \eqref{eq:axis-capacity}. Letting
\(\varepsilon\downarrow0\), lower semicontinuity and dominated convergence
yield
\begin{equation}
\label{eq:caccioppoli}
        \iint W\eta^2|\nabla \widetilde Z|^2
        \le
        C\iint W\widetilde Z^2|\nabla\eta|^2
        +CL\iint W\widetilde Z^2\eta^2 .
\end{equation}
Thus, after choosing \(\eta\equiv1\) on smaller relative cylinders and
covering compact subsets,
\begin{equation}
\label{eq:finite-energy-axis}
        \widetilde Z\in H^1_{\loc}([0,r_0)\times I;\dd\mathfrak m).
\end{equation}

We now use the same cutoff to remove the axis from the weak formulation.
Let \(\Phi\in C_c^\infty(Q_{2R}^+(z_0))\). Since
\(\Phi\chi_\varepsilon\in C_c^\infty((0,r_0)\times I)\), it is admissible in
\eqref{eq:radial-model}. Hence
\[
        \iint W\bigl[B\widetilde Z_t\Phi_t+A\widetilde Z_r\Phi_r-D\widetilde Z\Phi\bigr]\chi_\varepsilon
        =
        -\iint WA\widetilde Z_r\Phi\chi_\varepsilon' .
\]
By \eqref{eq:finite-energy-axis} and \eqref{eq:axis-capacity},
\[
        \left|\iint WA\widetilde Z_r\Phi\chi_\varepsilon'\right|
        \le C
        \biggl(\iint_{\varepsilon<r<2\varepsilon}W|\widetilde Z_r|^2\biggr)^{1/2}
        \biggl(\iint_{\varepsilon<r<2\varepsilon}
        W|\chi_\varepsilon'|^2\biggr)^{1/2}
        \to0 .
\]
The left-hand side converges by dominated convergence, using
\eqref{eq:finite-energy-axis} and the boundedness of \(\widetilde Z\). Therefore
\[
        \iint W\bigl[B\widetilde Z_t\Phi_t+A\widetilde Z_r\Phi_r-D\widetilde Z\Phi\bigr]=0,
\]
so \eqref{eq:radial-model} is valid for test functions compactly supported in
the relative topology of \([0,r_0)\times I\).

Let \(n:=\lceil N\rceil\) and \(\tau:=N-n\), so that \(-1<\tau\le0\). Set
\[
        \omega(\zeta):=\frac{W(|\zeta|)}{|\zeta|^{\,n-1}},
        \qquad \zeta\in B_{r_0}^n\setminus\{0\} .
\]
After defining \(\omega\) at \(\zeta=0\) arbitrarily, say by continuity when
available, we have \(\omega\simeq|\zeta|^\tau\) on compact subsets of
\(B_{r_0}^n\), as required by Theorem~\ref{thm:zamboni}.
Define
\[
        \widehat Z(\zeta,t):=\widetilde Z(|\zeta|,t),\qquad
        \mathcal A(\zeta,t):=\omega(\zeta)
        \begin{pmatrix}
        A(|\zeta|)I_n&0\\
        0&B(|\zeta|,t)
        \end{pmatrix},
        \qquad
        \mathfrak c(\zeta,t):=\omega(\zeta)D(|\zeta|).
\]
By \eqref{eq:finite-energy-axis},
\(\widehat Z\in H^1_{\loc}(B_{r_0}^n\times I;\omega)\). For
\(\Psi\in C_c^\infty(B_{r_0}^n\times I)\), let
\[
        \overline\Psi(r,t):=\frac1{|\mathbb S^{n-1}|}
        \int_{\mathbb S^{n-1}}\Psi(r\sigma,t)\dd\sigma .
\]
Since \(\overline\Psi\) is an admissible endpoint-touching test in
\eqref{eq:radial-model}, spherical integration gives
\[
        \iint \mathcal A\nabla\widehat Z\cdot\nabla\Psi
        -\iint \mathfrak c\,\widehat Z\Psi=0 .
\]
Theorem \ref{thm:zamboni}, applied with \(d=-D(|\zeta|)\), gives
\(\widehat Z\in C^\alpha_{\loc}(B_{r_0}^n\times I)\), after decreasing
\(\alpha\) if necessary. Restricting to radial lines gives the asserted
representative of \(\widetilde Z\).
\end{proof}

\subsection{Schauder bootstrap and tangential smoothness}

We now promote the H\"older estimate of Lemma~\ref{lem:radial-regularity}
to full Schauder regularity up to the radial axis, and extract the
smoothness of the trace \(\widetilde Z(0,\cdot)\) in the tangential variable~\(t\).
A key ingredient is the following corollary of the weighted Schauder estimates for
even solutions due to Sire--Terracini--Vita
\cite[Thms.~1.2 and 1.3]{STV2021}.

\begin{theorem}[Weighted Schauder estimate for even solutions]
\label{thm:stv-even-schauder}
Let \(n\ge1\), \(\mathfrak a>-1\), \(\alpha\in(0,1)\), and let \(B_1\) be
the unit ball in \(\R^n_x\times\R_y\). Let
\(a_1,\ldots,a_{n+1}\in C^{0,\alpha}(B_1)\) be even in \(y\), satisfy
\(\lambda\le a_i\le\Lambda\), and set
\[
        \mathcal A(x,y):=\operatorname{diag}
        (a_1(x,y),\ldots,a_{n+1}(x,y)).
\]
Let \(f\in L^q(B_1;|y|^{\mathfrak a})\), where
\begin{equation}
\label{eq:stv-q-condition}
        q>n+1+\mathfrak a^+,\qquad
        \alpha\le1-\frac{n+1+\mathfrak a^+}{q},
\end{equation}
and assume that \(f\) is even in \(y\). Let
\(\mathcal F\in C^{0,\alpha}(B_1;\R^{n+1})\), with
\(\mathcal F_i\) even in \(y\) for \(1\le i\le n\) and
\(\mathcal F_{n+1}\) odd in \(y\). If \(u\in H^1(B_1;|y|^{\mathfrak a})\)
is even in \(y\) and satisfies
\begin{equation}
\label{eq:stv-whole-weak}
        \int_{B_1}|y|^{\mathfrak a}
        \bigl(\mathcal A\nabla u+\mathcal F\bigr)\cdot\nabla\phi\dd x\dd y
        =\int_{B_1}|y|^{\mathfrak a}f\phi\dd x\dd y,
        \qquad \phi\in C_c^\infty(B_1),
\end{equation}
then \(u\in C^{1,\alpha}_{\loc}(B_1)\), with a local estimate depending only
on \(n,\mathfrak a,\alpha,q,\lambda,\Lambda\), the relevant H\"older and
\(L^q\) norms of the data, and
\(\|u\|_{L^2(B_1;|y|^{\mathfrak a})}\).
\end{theorem}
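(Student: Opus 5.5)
The plan is to deduce the statement directly from \cite[Thms.~1.2 and 1.3]{STV2021}. The only real work is to check that the parity hypotheses turn the whole-ball problem \eqref{eq:stv-whole-weak} into the conormal (Neumann) problem on the half ball \(B_1^+=B_1\cap\{y>0\}\) treated there. After that, the regularity up to \(\{y=0\}\) is reflected back across the axis.

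\emph{Step 1: reduction to even test functions.} Write any \(\phi\in C_c^\infty(B_1)\) as \(\phi=\phi_e+\phi_o\), its even and odd parts in \(y\). Since \(u\) is even, \(\nabla_xu\) is even and \(\partial_yu\) is odd. Since \(\mathcal A\) is diagonal with even entries, \(\mathcal F_1,\dots,\mathcal F_n\) are even and \(\mathcal F_{n+1}\) is odd, the vector \(\mathcal A\nabla u+\mathcal F\) has even \(x\)-components and an odd \(y\)-component. Pairing it with \(\nabla\phi_o\), which has odd \(x\)-components and an even \(y\)-component, gives an odd integrand. Likewise \(f\phi_o\) is odd. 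Both sides of \eqref{eq:stv-whole-weak} therefore vanish for \(\phi_o\), so the equation carries information only through even tests. For even \(\phi\), both sides are twice the corresponding integrals over \(B_1^+\). Every \(\psi\in C^\infty_c(B_1^+\cup\{y=0\})\) (compact support relative to the closed half ball) arises this way, up to the usual smoothing of the even extension, which is harmless at the level of \(H^1(|y|^{\mathfrak a})\).

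Hence \(u|_{B_1^+}\in H^1(B_1^+;y^{\mathfrak a})\) is a weak solution of
\[
        -\operatorname{div}\bigl(y^{\mathfrak a}(\mathcal A\nabla u+\mathcal F)\bigr)=y^{\mathfrak a}f\ \text{in }B_1^+,\qquad
        \lim_{y\to0^+}y^{\mathfrak a}\bigl(a_{n+1}u_y+\mathcal F_{n+1}\bigr)=0,
\]
in exactly the weak (test functions touching \(\{y=0\}\)) sense of the Neumann problem in \cite{STV2021}.

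\emph{Step 2: apply the half-ball Schauder estimate.} The coefficients \(a_i\) and the field \(\mathcal F\) are \(C^{0,\alpha}\) up to \(\{y=0\}\), and \(f\in L^q(B_1^+;y^{\mathfrak a})\) with \(q,\alpha\) as in \eqref{eq:stv-q-condition}; these are the hypotheses of \cite[Thm.~1.3]{STV2021} (with Thm.~1.2 covering the model case). That result yields \(u\in C^{1,\alpha}_{\loc}(B_1^+\cup\{y=0\})\). Its estimate depends on \(n,\mathfrak a,\alpha,q,\lambda,\Lambda\), the data norms and \(\|u\|_{L^2(y^{\mathfrak a})}\); the \(L^2\) norm over \(B_1\) controls the one over \(B_1^+\). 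The weighted condition on \(q\) is there precisely so that the \(L^q\) right-hand side is compatible with the Hölder exponent, and for \(\mathfrak a<0\) it reduces to the unweighted condition \(q>n+1\).

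\emph{Step 3: reflection.} The conormal condition holds pointwise at \(y=0\) once \(u\in C^1\) up to the boundary. Since \(\mathcal F_{n+1}\) is odd and continuous, \(\mathcal F_{n+1}(x,0)=0\), so \(a_{n+1}u_y(x,0)=0\) and hence \(u_y(x,0)=0\), using \(a_{n+1}\ge\lambda>0\). The even reflection of \(u\) is \(u\) itself. Its \(x\)-gradient is the even reflection of a \(C^{0,\alpha}\) function up to the boundary, hence \(C^{0,\alpha}\) in \(B_1\). Its \(y\)-derivative is the odd reflection of a \(C^{0,\alpha}\) function vanishing on \(\{y=0\}\), hence also \(C^{0,\alpha}\) with a comparable seminorm. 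This proves \(u\in C^{1,\alpha}_{\loc}(B_1)\) with the stated dependence.

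The main obstacle I expect is bookkeeping in Step 2. One must match the precise function spaces and weak formulation of \cite{STV2021}, in particular the meaning of the Neumann condition when \(\mathfrak a\ge1\), where traces in \(H^1(y^{\mathfrak a})\) degenerate. For \(\mathfrak a\ge1\) I would rather invoke their even-solution theorem on the whole ball directly, since Step 1 shows our equation is exactly their even formulation and then no trace is needed. I would also check that diagonal \(\mathcal A\) with even entries fits their structural assumption on the mixed coefficients, namely that the off-diagonal entries \(a_{i,n+1}\) vanish, or are odd, on \(\{y=0\}\).
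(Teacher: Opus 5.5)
Your proposal is correct and takes the same route as the paper, which gives no separate argument and presents this result as a direct corollary of Sire--Terracini--Vita's Theorems 1.2 and 1.3. Your half-ball reduction (Step 1) and reflection (Step 3) are sound but unnecessary, because those theorems are already formulated for even energy solutions of \eqref{eq:stv-whole-weak} on the whole ball. It therefore suffices to check the parity and structure conditions on \(\mathcal A,\mathcal F,f\) and to use the weighted \(L^2\) norm of \(u\) in their estimate, and your trace concern for \(\mathfrak a\ge1\) never arises.
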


Write \(W(r)=r^{N-1}\omega_0(r)\) and, for \(r\ge0\),
\[
        \mathcal A_{\widetilde Z}(r,t):=\omega_0(r)
        \begin{pmatrix}
        A(r)&0\\
        0&\beta(\widetilde Z(r,t))
        \end{pmatrix},
        \qquad
        f_{\widetilde Z}(r,t):=\omega_0(r)D(r)\widetilde Z(r,t).
\]
Lemma \ref{lem:radial-regularity} extends the weak formulation
\eqref{eq:radial-weak} from test functions in \((0,r_0)\times I\) to test
functions touching \(r=0\). Hence, on each small half-ball in the
\((t,r)\)-plane, we may reflect
\[
        \widetilde Z^e(r,t):=\widetilde Z(|r|,t),\qquad
        \mathcal A_{\widetilde Z}^e(r,t):=\mathcal A_{\widetilde Z}(|r|,t),
        \qquad
        f_{\widetilde Z}^e(r,t):=f_{\widetilde Z}(|r|,t).
\]
Indeed, for \(\psi\in C_c^\infty\) in the reflected ball, set
\(\Phi(r,t):=\psi(r,t)+\psi(-r,t)\) for \(r\ge0\). Testing the
endpoint-touching form of \eqref{eq:radial-weak} with \(\Phi\) and changing
variables in the reflected term gives \eqref{eq:stv-whole-weak} for \(\widetilde Z^e\),
\(\mathcal A_{\widetilde Z}^e\), and \(f_{\widetilde Z}^e\); the reflected matrix is diagonal with
diagonal entries even in \(r\).
We are thus in the setting of Theorem~\ref{thm:stv-even-schauder} with
\(n=1\), the variables \((x_1,y)\) of the theorem playing the roles of
\((t,r)\) (so the weight \(|y|^{\mathfrak a}\) becomes \(|r|^{\mathfrak a}\),
and the diagonal entries of \(\mathcal A_{\widetilde Z}\) are read in the
permuted order); the weight exponent is \(\mathfrak a=N-1=3+2/\theta\) and
\(\mathcal F\equiv0\). Condition \eqref{eq:stv-q-condition} reduces to
\(q>N+1\) and \(\alpha\le1-(N+1)/q\).

\begin{proposition}[Schauder bootstrap and smoothness of the axis trace]
\label{prop:endpoint-schauder-bootstrap}
Under the assumptions of Theorem~\ref{thm:higher-boundary-regularity},
let \((u,m)\) be the corresponding solution, \(\gamma\) the associated
Lagrangian flow, \(Z\) as in \eqref{eq:Z-def},
\(\widetilde Z(r,t):=Z(r^2/4,t)\), and \(I\Subset(0,T)\). Then for every
\(\tau\in(0,1)\) and every \(j\ge0\),
\[
        \partial_t^j\widetilde Z\in C^{1,\tau}_{\loc}([0,r_0)\times I).
\]
In particular, \(\widetilde Z(0,\cdot)\in C^\infty_{\loc}(I)\).
\end{proposition}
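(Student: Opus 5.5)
We argue by induction on \(j\), working on small half-cylinders \([0,r_1)\times I'\) with \(I'\Subset I\) and shrinking them finitely many times at each step. Every Schauder estimate will be applied to the even reflection across \(r=0\), as set up after Theorem~\ref{thm:stv-even-schauder}, with \(n=1\), \(\mathfrak a=N-1\) and \(\mathcal F\equiv0\).

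\emph{Base case \(j=0\).} Lemma~\ref{lem:radial-regularity} gives \(\widetilde Z\in C^\alpha_{\loc}([0,r_0)\times I)\) for some \(\alpha\in(0,1)\), and \(\widetilde Z\) lies in the weighted \(H^1\) space up to the axis. The coefficient \(\beta(\widetilde Z)\) is then \(C^\alpha\), since \(\beta\) is smooth on the range of \(Z\), which is bounded away from zero. The factors \(\omega_0\) and \(A\) are built from \(h(r^2/4)\) and are Lipschitz in \(r\) by Lemma~\ref{lem:h-boundary}. The source \(f_{\widetilde Z}=\omega_0D\widetilde Z\) is bounded, so it lies in \(L^q\) for every \(q\). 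Theorem~\ref{thm:stv-even-schauder} therefore gives \(\widetilde Z\in C^{1,\alpha}\). Now \(\beta(\widetilde Z)\) is Lipschitz, hence \(C^{0,\tau}\) for every \(\tau<1\). Taking \(q\) large, a second application yields \(\widetilde Z\in C^{1,\tau}_{\loc}\) for every \(\tau\in(0,1)\).

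\emph{Tangential derivatives.} The coefficients \(W\), \(A\) and \(D\) do not depend on \(t\); only \(\beta(\widetilde Z)\) does. We use difference quotients \(\delta_h v:=(v(\cdot,t+h)-v(\cdot,t))/h\). Subtracting the weak formulations at times \(t+h\) and \(t\), which remain valid for endpoint-touching tests, gives
\[
\iint W\bigl[\beta(\widetilde Z)(\delta_h\widetilde Z)_t\Phi_t+A(\delta_h\widetilde Z)_r\Phi_r-D\,\delta_h\widetilde Z\,\Phi\bigr]
=-\iint W\,\bigl(\delta_h\beta(\widetilde Z)\bigr)\,\widetilde Z_t(\cdot+h)\,\Phi_t .
\]
The right-hand side has divergence form with \(\mathcal F_t=(\delta_h\beta)\widetilde Z_t(\cdot+h)\). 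By the base case this field is bounded in \(C^{0,\tau}\) uniformly in \(h\), and it is even in \(r\) after reflection, as the \(t\)-component must be. The hypothesis \(\widetilde Z\in C^{1,\tau}\) also makes \(\delta_h\widetilde Z\) uniformly bounded in weighted \(H^1\) on slightly smaller cylinders.

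Theorem~\ref{thm:stv-even-schauder}, now with nonzero \(\mathcal F\), gives uniform \(C^{1,\tau}\) bounds on \(\delta_h\widetilde Z\). By Arzelà--Ascoli these pass to the limit, so \(\partial_t\widetilde Z\in C^{1,\tau}_{\loc}\). The limit solves the differentiated equation with \(\mathcal F_t=\beta'(\widetilde Z)\widetilde Z_t^2\).

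\emph{Inductive step.} Suppose \(\partial_t^i\widetilde Z\in C^{1,\tau}\) for all \(i\le j\). Then \(v_j:=\partial_t^j\widetilde Z\) solves
\[
\iint W\bigl[\beta(\widetilde Z)(v_j)_t\Phi_t+A(v_j)_r\Phi_r-Dv_j\Phi\bigr]=-\iint W\,\mathcal G_j\,\Phi_t,
\]
where \(\mathcal G_j=\partial_t^j(\beta(\widetilde Z)\widetilde Z_t)-\beta(\widetilde Z)\partial_t^{j+1}\widetilde Z\). By the Leibniz rule, \(\mathcal G_j\) is a polynomial in \(\beta^{(k)}(\widetilde Z)\) and in \(\partial_t^i\widetilde Z\) with \(i\le j\). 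By the induction hypothesis it is therefore \(C^{0,\tau}\) and even in \(r\). Repeating the difference-quotient argument on \(v_j\) gives \(v_{j+1}\in C^{1,\tau}_{\loc}\).

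Restricting to \(r=0\) shows that every \(t\)-derivative of \(\widetilde Z(0,\cdot)\) is continuous. Hence \(\widetilde Z(0,\cdot)\in C^\infty_{\loc}(I)\).

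\emph{Main obstacle.} The delicate point is justifying the weak formulation for \(\delta_h\widetilde Z\) up to the axis and keeping the parity and regularity hypotheses of Theorem~\ref{thm:stv-even-schauder} intact at every stage. Two things need checking. First, the reflected coefficients must be genuinely \(C^{0,\alpha}\) across \(r=0\). This holds because \(\omega_0\) and \(A\) are Lipschitz functions of \(r^2\), hence of \(|r|\). Second, the weighted \(L^2\) and \(H^1\) norms of the difference quotients must stay uniformly bounded. For this I would use the Caccioppoli inequality~\eqref{eq:caccioppoli} adapted to the divergence-form right-hand side, or simply the \(C^{1,\tau}\) bound from the previous step.
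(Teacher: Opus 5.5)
Your proposal is correct and follows essentially the paper's route: the same two-step Schauder base case, then a tangential induction by difference quotients in \(t\) and Theorem~\ref{thm:stv-even-schauder}, with a divergence-form tangential field built from \(\beta^{(k)}(\widetilde Z)\) and lower \(t\)-derivatives. The only variation is where the Schauder estimate is applied. You apply it uniformly to \(\delta_h\partial_t^{j}\widetilde Z\) and pass to the limit by Arzel\`a--Ascoli. This needs \(\delta_h\mathcal G_j\) to be uniformly \(C^{0,\tau}\), which holds because every factor of \(\mathcal G_j\) is \(C^{1,\tau}\) under your induction hypothesis (slightly more than the \(C^{0,\tau}\) you state). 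The paper instead uses a Caccioppoli bound on the difference quotients only to place \(\partial_t^j\widetilde Z\) in the weighted energy space with its limiting weak equation, and then applies the Schauder estimate once to \(\partial_t^j\widetilde Z\) with \(\mathfrak F_j\in C^{0,\tau}\).
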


\begin{proof}
We first record, at the formal level, the linear equation that drives the
tangential induction. Put \(\widetilde Z_0:=\widetilde Z\), \(\mathfrak F_0:=0\), and
\(f_0:=D\widetilde Z\), and, for \(j\ge1\), write \(\widetilde Z_j:=\partial_t^j\widetilde Z\);
that this derivative exists in the weighted energy class is part of the
inductive claim and is established by the difference-quotient step below.
Granting this, formal differentiation of \eqref{eq:radial-weak} \(j\) times
in \(t\) shows that \(\widetilde Z_j\) satisfies
\begin{equation}
\label{eq:bootstrap-linear}
        \partial_t\!\left(W\beta(\widetilde Z)(\widetilde Z_j)_t\right)
        +\partial_r\!\left(WA(\widetilde Z_j)_r\right)
        =-Wf_j+\partial_t(W\mathfrak F_j),
\end{equation}
where \(f_j:=D\widetilde Z_j\) and \(\mathfrak F_j\) is a finite linear combination of
products of \(\beta^{(k)}(\widetilde Z)\), \(1\le k\le j\), with the tangential
derivatives \(\partial_t^i\widetilde Z\), \(1\le i\le j\). Equivalently, for \(j\ge0\),
\begin{equation}
\label{eq:bootstrap-weak}
        \iint W\Bigl[
        \beta(\widetilde Z)(\widetilde Z_j)_t\Phi_t+A(\widetilde Z_j)_r\Phi_r
        -\mathfrak F_j\Phi_t-f_j\Phi
        \Bigr]\dd r\dd t=0
\end{equation}
for every \(\Phi\in C_c^\infty([0,r_0)\times I)\), where \([0,r_0)\) is
taken with its relative topology so that \(\Phi\) may touch \(r=0\). The principal coefficient is
\(\mathcal A_{\widetilde Z}\) as in the base case. In the weak form associated with
Theorem~\ref{thm:stv-even-schauder}, the lower-order data are the bulk
source \(\omega_0 f_j\) and the tangential field
\(\mathcal F=(0,-\omega_0 \mathfrak F_j)\) in the \((r,t)\)-components.

\emph{Difference-quotient step.} We now justify \eqref{eq:bootstrap-weak}
inductively in \(j\). Let \(\delta_h\) denote the difference quotient in
\(t\), and suppose that, for some \(\alpha\in(0,1)\),
\(\widetilde Z_{j-1}\in C^{1,\alpha}_{\loc}\cap H^1_{\loc}([0,r_0)\times I;\dd\mathfrak m)\)
and that \eqref{eq:bootstrap-weak} holds with index \(j-1\). On cylinders
compactly contained in \([0,r_0)\times I\), set
\(v_h:=\delta_h\widetilde Z_{j-1}\). Taking the difference quotient of
\eqref{eq:bootstrap-weak} gives
\[
        \iint W\Bigl[
        \beta(\widetilde Z(\cdot,t+h))(v_h)_t\Phi_t
        +A(v_h)_r\Phi_r-G_h\Phi_t-d_h\Phi
        \Bigr]\dd r\dd t=0,
\]
where \(d_h:=\delta_h f_{j-1}\) and
\[
        G_h:=\delta_h\mathfrak F_{j-1}
        -\delta_h(\beta(\widetilde Z))(\widetilde Z_{j-1})_t .
\]
The induction hypotheses imply that \(G_h\) is bounded uniformly in \(h\),
while \(d_h=Dv_h\). Testing with \(v_h\zeta^2\), where \(\zeta\) is a
relative cutoff, and using Young's inequality gives
\[
        \iint W\zeta^2|\nabla v_h|^2
        \le C\iint_{\operatorname{supp}\zeta}W(1+|v_h|^2).
\]
Since \(v_h\to\partial_t\widetilde Z_{j-1}=\widetilde Z_j\) locally uniformly, the right side is
bounded independently of \(h\). Hence
\(\widetilde Z_j\in H^1_{\loc}([0,r_0)\times I;\dd\mathfrak m)\).
Moreover, \(G_h\to\mathfrak F_j\) as \(h\to0\), with
\(\mathfrak F_j:=\partial_t\mathfrak F_{j-1}-\beta'(\widetilde Z)\widetilde Z_t(\widetilde Z_{j-1})_t\).
Passing to the limit recovers \eqref{eq:bootstrap-weak}---and hence
\eqref{eq:bootstrap-linear}---for \(\widetilde Z_j\).

\emph{Schauder at arbitrary exponent.} By
Lemma~\ref{lem:radial-regularity}, \(\widetilde Z\in C^{\alpha_0}\) up to \(r=0\)
for some \(\alpha_0\in(0,1)\). By Lemma~\ref{lem:h-boundary},
\(p_0\in C^{1,1}([0,\delta])\), so the factors \(\omega_0\) and
\(A\) lie in \(C^{0,1}\). Hence \(\mathcal A_{\widetilde Z}\in C^{0,\alpha_0}\) and
\(f_{\widetilde Z}\in L^\infty\); picking \(q>N+1\) with \(\alpha_0\le 1-(N+1)/q\),
Theorem~\ref{thm:stv-even-schauder} applied to~\eqref{eq:radial-weak}
gives \(\widetilde Z\in C^{1,\alpha_0}_{\loc}\) up to \(r=0\), and in particular \(\widetilde Z\)
is Lipschitz. Therefore \(\beta(\widetilde Z)\in C^{0,1}\) and
\(\mathcal A_{\widetilde Z}\in C^{0,1}\). Now fix any \(\tau\in(0,1)\) and choose
\(q>N+1\) with \(\tau\le 1-(N+1)/q\); since
\(\mathcal A_{\widetilde Z}\in C^{0,1}\subset C^{0,\tau}\) and \(f_{\widetilde Z}\in L^\infty\),
Theorem~\ref{thm:stv-even-schauder} reapplied yields
\(\widetilde Z\in C^{1,\tau}_{\loc}\), and in particular
\(\widetilde Z(0,\cdot)\in C^{1,\tau}_{\loc}(I)\).

\emph{Tangential induction.} Fix \(\tau\in(0,1)\) and suppose
\(\partial_t^i\widetilde Z\in C^{1,\tau}\) for \(0\le i\le j-1\); in particular
\(\widetilde Z_j=\partial_t\widetilde Z_{j-1}\in C^{0,\tau}\). The difference-quotient
estimate gives \(\widetilde Z_j\in H^1_{\loc}([0,r_0)\times I;\dd\mathfrak m)\)
together with the weak equation \eqref{eq:bootstrap-weak}. The induction
hypothesis places each factor entering \(\mathfrak F_j\), namely \(\beta^{(k)}(\widetilde Z)\)
and \(\partial_t^i\widetilde Z\) for \(i\le j\), in \(C^{0,\tau}\), so
\(\mathfrak F_j\in C^{0,\tau}\), while \(f_j=D\widetilde Z_j\in L^\infty\).
Theorem~\ref{thm:stv-even-schauder} then gives
\(\widetilde Z_j\in C^{1,\tau}\). By induction \(\partial_t^j\widetilde Z\in C^{1,\tau}\)
for every \(j\ge0\), and consequently \(\widetilde Z(0,\cdot)\in C^\infty_{\loc}(I)\).
\end{proof}

\subsection{From radial to Lagrangian coordinates}

The bootstrap of the previous subsection delivers one normal derivative
at the endpoint, in the square-root distance variable \(r=2\sqrt y\)
(recall \(a=0\)). To climb to higher normal regularity we
return to the Lagrangian variable \(y\). Expanded in non-divergence form,
equation~\eqref{eq:Z-div} reads
\[
        c_\theta p_0 Z_{yy}+(1+c_\theta)p_0' Z_y
        +p_0''Z+\partial_t(\beta(Z)Z_t)=0
\]
for \(y>0\). With \(h\) as in \eqref{eq:h-def}, the linear vanishing
\(p_0(y)=y\,h(y)\) places a single power of \(y\) in front of \(Z_{yy}\);
writing \(V:=Z_y\) and dividing by \(c_\theta h(y)\) puts the equation in
regular-singular form
\begin{equation}
\label{eq:reg-sing-ode}
        y V_y+b(y)V=F,
\end{equation}
with
\begin{equation}
\label{eq:reg-sing-coefs}
        b(y):=\frac{(1+c_\theta)p_0'(y)}{c_\theta h(y)},
        \qquad
        F:=-\frac{\partial_t(\beta(Z)Z_t)+p_0''(y)Z}{c_\theta h(y)} .
\end{equation}
At \(y=0\), \(b(0)=(1+c_\theta)/c_\theta>1\), so the singular homogeneous
solution \(y^{-b(0)}\) is non-integrable; since \(Z\) is bounded, this
mode is excluded, and \(V\) inherits the regularity of \(F\).
Lemma~\ref{lem:regular-singular-descent} below records the H\"older
version of this fact, and Proposition~\ref{prop:finite-endpoint-regularity}
iterates it in \(y\) and \(t\) to lift the endpoint regularity to
\(C^{\ell,\sigma}\) under the hypothesis \(p_0\in C^{\ell,\sigma}\).

\begin{lemma}[H\"older estimate for a regular-singular ODE]
\label{lem:regular-singular-descent}
Let \(k\ge0\), \(\alpha\in(0,1)\), \(b\in C^{k,\alpha}([0,\rho])\), and
\(b\ge b_0>0\). Suppose that \(V\in L^\infty((0,\rho)\times I)\) satisfies
\eqref{eq:reg-sing-ode} distributionally in \((0,\rho)\times I\), with
\(F\in C^{k,\alpha}([0,\rho]\times I)\).
Then
\[
        V\in C^{k,\alpha}([0,\rho]\times I),
        \qquad
        y\,\partial_y^{k+1}V\in C^{0,\alpha}([0,\rho]\times I).
\]
\end{lemma}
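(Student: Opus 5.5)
Since \(y V_y+b(y)V=F\) is an ODE in \(y\) with \(t\) entering only as a parameter, I will solve it explicitly with an integrating factor and read off the regularity from the resulting integral formula. Put \(\mu(y):=\exp\bigl(\int_\rho^y (b(s)-b(0))\,s^{-1}\,ds\bigr)\). Because \(b\in C^{k,\alpha}\), the quotient \((b(s)-b(0))/s\) is \(C^{k-1,\alpha}\) for \(k\ge1\) and \(O(s^{\alpha-1})\) (integrable) for \(k=0\), so \(\mu\) is \(C^{k,\alpha}\) and bounded above and below on \([0,\rho]\). Then \(y^{b(0)}\mu(y)\) is an integrating factor, and the equation becomes \(\partial_y\bigl(y^{b(0)}\mu V\bigr)=y^{b(0)-1}\mu F\) distributionally in \(y\), for a.e.\ \(t\). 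Integrating from \(y\) to \(\rho\) shows that \(y^{b(0)}\mu V\) equals \(c(t)+\int_0^y s^{b(0)-1}\mu F\,ds\). As \(y\to0\), the left side tends to \(0\) since \(V\) is bounded and \(b(0)\ge b_0>0\), and the integral also vanishes, so \(c(t)=0\). This is the step that rules out the singular mode \(y^{-b(0)}\), and it only needs \(V\in L^\infty\). Substituting \(s=y\sigma\) gives the representation
\[
V(y,t)=\int_0^1 \sigma^{b(0)-1}\,\frac{\mu(y\sigma)}{\mu(y)}\,F(y\sigma,t)\,d\sigma .
\]

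Next I will read the regularity off this formula. Since \(b(0)>0\), the weight \(\sigma^{b(0)-1}\) is integrable on \((0,1)\). The integrand \(\mu(y\sigma)F(y\sigma,t)/\mu(y)\) is \(C^{k,\alpha}\) in \((y,t)\), uniformly in \(\sigma\), because dilation by \(\sigma\le1\) does not increase \(C^{k,\alpha}\) seminorms. So I can differentiate under the integral up to order \(k\) and bound the H\"older quotients uniformly, which gives \(V\in C^{k,\alpha}([0,\rho]\times I)\) with \(\|V\|_{C^{k,\alpha}}\le C\|F\|_{C^{k,\alpha}}\). The \(t\)-regularity is automatic, since \(t\) is only a parameter. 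A cleaner alternative is to treat \(\mu\) by writing \((b-b(0))V\) as part of the right-hand side and bootstrapping, but the direct formula should suffice.

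For the weighted top derivative, I will differentiate the equation \(k\) times in \(y\). Using \(\partial_y^k(yV_y)=y\,\partial_y^{k+1}V+k\,\partial_y^kV\), this gives
\[
y\,\partial_y^{k+1}V=\partial_y^kF-k\,\partial_y^kV-\sum_{j\le k}\binom kj\partial_y^{j}b\;\partial_y^{k-j}V .
\]
Every term on the right is already in \(C^{0,\alpha}\) by the previous step. This identity holds distributionally, and since its right-hand side is continuous it yields \(y\,\partial_y^{k+1}V\in C^{0,\alpha}\).

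I expect the main technical obstacle to be the H\"older bound in \(t\)-\(y\) jointly near \(y=0\) when \(k=0\), where \(\mu\) is only \(C^{0,\alpha}\) and the ratio \(\mu(y\sigma)/\mu(y)\) must be estimated carefully. If that estimate becomes awkward, I will first handle the constant-coefficient case \(b\equiv b(0)\) via the formula above, and then absorb \((b-b(0))V=O(y^\alpha)\) into \(F\) by a short iteration on a small interval \([0,\rho]\).
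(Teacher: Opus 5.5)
Your proposal is correct and follows the paper's proof essentially step for step. It uses the same integrating factor \(y^{b(0)}\mu\) (normalizing \(\mu\) at \(\rho\) rather than \(0\) only changes a constant), removes the singular mode using boundedness of \(V\), differentiates the same Volterra representation \(V=\int_0^1\sigma^{b(0)-1}\frac{\mu(y\sigma)}{\mu(y)}F(y\sigma,t)\,d\sigma\) under the integral, and differentiates the equation \(k\) times to get the weighted top derivative. The paper also states explicitly that \(V_y=(F-bV)/y\in L^\infty_{\loc}\), so \(V(\cdot,t)\in W^{1,\infty}_{\loc}\) for a.e. \(t\); this justifies the slicing and product-rule step you pass over.
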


\begin{proof}
For \(k=0\), put
\[
        \mu(y):=\exp\int_0^y\frac{b(s)-b(0)}s\dd s .
\]
The integral is finite because \(b\in C^\alpha\), and \(\mu\) is
uniformly positive and \(C^\alpha\). Since \(F-bV\in L^\infty\), the equation gives
\(V_y\in L^\infty_{\loc}((0,\rho)\times I)\). Thus, for a.e. \(t\),
\(V(\cdot,t)\in W^{1,\infty}_{\loc}(0,\rho)\) and the equation holds
pointwise for a.e. \(y\). Multiplying this one-dimensional equation by
\(y^{b(0)-1}\mu\) gives
\[
        \partial_y\bigl(y^{b(0)}\mu V\bigr)
        =y^{b(0)-1}\mu F
\]
on \((0,\rho)\). Hence
\[
        y^{b(0)}\mu(y)V(y,t)
        =
        C(t)+\int_0^y s^{b(0)-1}\mu(s)F(s,t)\dd s ,
\]
for a.e. \(t\), where \(C(t)\) is measurable. Since \(V\in L^\infty\), the
term \(y^{-b(0)}\mu^{-1}C(t)\) cannot be locally bounded unless \(C(t)=0\).
Therefore \(V\) is represented by
\[
        V(y,t)=
        y^{-b(0)}\mu(y)^{-1}
        \int_0^y s^{b(0)-1}\mu(s)F(s,t)\dd s .
\]
Equivalently,
\[
        V(y,t)=
        \int_0^1\lambda^{b(0)-1}
        \frac{\mu(\lambda y)}{\mu(y)}F(\lambda y,t)\dd\lambda .
\]
The kernel in this Volterra formula is bounded and \(C^\alpha\) in
\(y\in[0,\rho]\), uniformly in \(\lambda\in[0,1]\), with an integrable
factor \(\lambda^{b(0)-1}\). Thus the formula defines a
\(C^{0,\alpha}([0,\rho]\times I)\) representative, with trace
\(V(0,t)=F(0,t)/b(0)\). The equation then gives
\(y V_y=F-bV\in C^{0,\alpha}([0,\rho]\times I)\).

If \(k\ge1\), then
\[
        \frac{b(y)-b(0)}{y}\in C^{k-1,\alpha}([0,\rho]),
\]
after assigning its trace at \(0\). Hence \(\mu\in C^{k,\alpha}([0,\rho])\).
The Volterra kernel
\[
        K(\lambda,y):=\lambda^{b(0)-1}
        \frac{\mu(\lambda y)}{\mu(y)}
\]
has \(C^{k,\alpha}\) norm in \(y\) bounded by an integrable function of
\(\lambda\). Differentiating the representation formula under the integral
therefore gives \(V\in C^{k,\alpha}([0,\rho]\times I)\). Finally,
differentiating \(y V_y=F-bV\) \(k\) times in \(y\) gives
\[
        y\partial_y^{k+1}V
        =\partial_y^k(F-bV)-k\partial_y^kV\in C^{0,\alpha}.
\]
\end{proof}

\begin{proposition}[Higher-order Lagrangian endpoint regularity]
\label{prop:finite-endpoint-regularity}
Suppose, in addition to the assumptions of
Theorem~\ref{thm:higher-boundary-regularity}, that
\(p_0\in C^{\ell,\sigma}([0,\delta])\) for some integer \(\ell\ge2\) and
some \(\sigma\in(0,1)\). Let \((u,m)\) be the corresponding solution and
\(\gamma\) the associated Lagrangian flow, and set
\(\gamma_L:=\gamma(0,\cdot)\). Let \(Z\) be as in \eqref{eq:Z-def} and
fix \(I\Subset(0,T)\). Then
\[
        \gamma-\gamma_L,\quad
        \partial_t(\gamma-\gamma_L)\in
        C^{\ell,\sigma}_{\loc}([0,r_0^2/4)\times I),
\]
and
\[
        (y,t)\mapsto m(\gamma(y,t),t)^\theta
        \in C^{\ell,\sigma}_{\loc}([0,r_0^2/4)\times I).
\]
Moreover, \(Z(0,\cdot)\in C^\infty_{\loc}(I)\). The symmetric statement
holds at the right endpoint \(b\), after replacing \(y\) by \(b-y\).
\end{proposition}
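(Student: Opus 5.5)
We argue by induction on the regularity level, combining the tangential smoothness of Proposition~\ref{prop:endpoint-schauder-bootstrap} with the regular-singular descent of Lemma~\ref{lem:regular-singular-descent}.

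\emph{Starting point.} Proposition~\ref{prop:endpoint-schauder-bootstrap} gives \(\partial_t^j\widetilde Z\in C^{1,\tau}\) up to \(r=0\) for all \(j\), and hence \(Z(0,\cdot)\in C^\infty_{\loc}(I)\). Since \(y=r^2/4\) and \(\widetilde Z\) is even in \(r\) after reflection, this already yields \(\partial_t^jZ\in C^{0,\tau'}([0,\rho]\times I)\) in the variable \(y\), with \(\tau'=\tau/2\) for instance; every \(\tau<1\) is available.

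\emph{Induction.} The claim is that \(\partial_t^jZ\in C^{k,\sigma}([0,\rho]\times I)\) for all \(j\ge0\) and all \(k\le\ell-1\). Assume the claim at level \(k-1\) for all \(j\). Apply \(\partial_t^j\) to \eqref{eq:reg-sing-ode}. The coefficients \(b\) and \(h\) do not depend on \(t\), so \(V_j:=\partial_t^jZ_y\) solves
\[
yV_{j,y}+bV_j=\partial_t^jF .
\]
Here \(\partial_t^jF\) is built from \(\partial_t^{j+2}Z\), products of \(\beta^{(m)}(Z)\) with time derivatives of \(Z\), and \(p_0''Z/h\). Given \(p_0\in C^{\ell,\sigma}\), we have \(h\in C^{\ell-1,\sigma}\), \(p_0''\in C^{\ell-2,\sigma}\) and \(b\in C^{\ell-2,\sigma}\). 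Hence \(\partial_t^jF\in C^{k-1,\sigma}\) as long as \(k-1\le\ell-2\). Boundedness of \(V_j\) follows from the \(C^{1,\tau}\) bound in \(r\): we have \(Z_y=(2/r)\widetilde Z_r\), and \(\widetilde Z_r\) is odd and \(C^{0,\tau}\), so it vanishes at \(r=0\). This only gives \(|Z_y|\lesssim r^{\tau-1}\), so we must argue more carefully. We treat \(V_j\) as a distributional solution whose singular mode \(C(t)y^{-b(0)}\) is excluded because \(y^{-b(0)}\) is non-integrable while \(Z\) is bounded, as in the proof of Lemma~\ref{lem:regular-singular-descent}. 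We therefore expect to adapt that lemma to \(V\in L^1_{\loc}\) with bounded primitive. The lemma then gives \(V_j\in C^{k-1,\sigma}\), hence \(\partial_t^jZ\in C^{k,\sigma}\), and at the top level \(k=\ell-1\) it also gives \(y\,\partial_y^{\ell}V_j\in C^{0,\sigma}\). This bootstrap is the step we expect to be the main obstacle: it requires bookkeeping of which time derivatives enter \(F\), and it requires justifying the weak-to-pointwise passage and the exclusion of the singular mode with only the a priori bounds at hand.

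\emph{Transfer to \(\gamma\) and \(p\).} We have \(\gamma_y=Z^{-1/(\theta+1)}\in C^{\ell-1,\sigma}\), and \(\gamma-\gamma_L=\int_0^y\gamma_y\in C^{\ell,\sigma}\). Similarly \(\partial_t(\gamma-\gamma_L)=\int_0^y\partial_t\gamma_y\), and \(\partial_t\gamma_y\) is again a smooth function of \(Z\) and \(Z_t\), both in \(C^{\ell-1,\sigma}\). For the pressure, \eqref{eq:mass-relation} gives
\[
m(\gamma,t)^\theta=y\,h(y)\,Z^{\theta/(\theta+1)} .
\]
The factor \(h\) lies only in \(C^{\ell-1,\sigma}\), but \(y\,h=p_0\in C^{\ell,\sigma}\). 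For the \(Z\)-factor, the \(\ell\)-th \(y\)-derivative of \(y\,G(Z)\) is \(\ell\,\partial_y^{\ell-1}G(Z)+y\,\partial_y^\ell G(Z)\). The second term is controlled by \(y\,\partial_y^{\ell}Z\in C^{0,\sigma}\) from the top-order output of Lemma~\ref{lem:regular-singular-descent}, which is exactly the weight absorption described in the introduction. Mixed \(t\)-derivatives are handled by the \(j\)-uniform statements above. The statement at \(b\) follows by the same argument after \(y\mapsto b-y\).
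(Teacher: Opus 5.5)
Your proposal is correct and follows the paper's skeleton. Tangential smoothness comes from Proposition~\ref{prop:endpoint-schauder-bootstrap}; then $V_j=\partial_y\partial_t^jZ$ is shown to be bounded; Lemma~\ref{lem:regular-singular-descent} is iterated on the $t$-differentiated regular-singular equation; finally you integrate in $y$ and absorb the top-order weight by $p_0=yh$. Two steps are organized differently.

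\emph{Boundedness of $V_j$.} The paper integrates the differentiated weighted equation \eqref{eq:bootstrap-linear} in $r$ from the axis to get $(\widetilde Z_i)_r=O(r)$. Only then does it apply the lemma with $V\in L^\infty$. You instead kill the singular mode inside the lemma's representation formula, using only $|V_j|\lesssim r^{\tau-1}\simeq y^{-(1-\tau)/2}$.

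The step you flagged as the main obstacle is in fact immediate. The formula gives $|C(t)|\le Cy^{b(0)}\bigl(1+y^{-(1-\tau)/2}\bigr)\to0$, and the rest of the lemma's proof is unchanged. The two arguments are the same computation in different variables. We have $b(0)=1+1/c_\theta=N/2$ and the integrating factor is $\mu=(h/h(0))^{b(0)}$. Hence $WA\widetilde Z_r=c_\theta p_0^{1+1/c_\theta}Z_y=c_\theta h(0)^{b(0)}\,y^{b(0)}\mu V$, so the paper's vanishing boundary term at $\rho=0$ is exactly $C(t)=0$.

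\emph{Bookkeeping.} Proposition~\ref{prop:endpoint-schauder-bootstrap} supplies every tangential derivative, so inducting on the normal order with all $j$ at once is legitimate. This replaces the paper's triangular induction under $i+2(n+1)\le2\ell-1$ and the case split \eqref{eq:zji-bulk}--\eqref{eq:zji-top}. It also makes $\partial_t(\gamma-\gamma_L)\in C^{\ell,\sigma}$ immediate, because $\partial_t\gamma_y$ is a smooth function of $Z,Z_t\in C^{\ell-1,\sigma}$.

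Three small corrections:
\begin{enumerate}
\item The starting exponent $\tau/2<1/2$ does not give $F_j\in C^{0,\sigma}$ when $\sigma\ge1/2$. Either use $\widetilde Z_r(0,t)=0$ to get the exponent $(1+\tau)/2$ in $y$, or run the lemma once with a small exponent to get $V_j$ bounded (hence $\partial_t^jZ$ Lipschitz) and then rerun at $\sigma$.
\item The top-order output of the lemma at order $\ell-2$ is $y\,\partial_y^{\ell-1}V_j=y\,\partial_y^{\ell}\partial_t^jZ$, not $y\,\partial_y^{\ell}V_j$. Your transfer step already uses the correct quantity.
\item $b$ lies in $C^{\ell-1,\sigma}$, not just $C^{\ell-2,\sigma}$, though the weaker bound is all you need.
\end{enumerate}
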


\begin{proof}
With \(h\) as in \eqref{eq:h-def}, the higher regularity of \(p_0\) gives
\(h\in C^{\ell-1,\sigma}([0,\delta))\) with \(h(0)>0\). We claim that, for
\(0\le i\le2\ell-1\),
\begin{equation}
\label{eq:tangential-z-bound}
        \partial_t^i Z\in C^{0,1}_{\loc}([0,r_0^2/4)\times I),
\end{equation}
and \(Z(0,\cdot)\in C^\infty_{\loc}(I)\). The substitution \(r=2\sqrt y\)
is degenerate at the axis, so this does not follow from the H\"older
bounds on \(\partial_t^i\widetilde Z\) of Proposition
\ref{prop:endpoint-schauder-bootstrap} by direct composition; the missing
rate of vanishing at the axis, \((\partial_t^i\widetilde Z)_r=O(r)\), comes from integrating the
divergence-form equation for \(\partial_t^i\widetilde Z\) against the radial weight.

Set \(\widetilde Z_i:=\partial_t^i\widetilde Z\); with the conventions
\(\mathfrak F_0=0\), \(f_0=D\widetilde Z\) introduced in the proof of
Proposition~\ref{prop:endpoint-schauder-bootstrap},
equation~\eqref{eq:bootstrap-linear} holds for all \(i\ge0\). Integrate
it in \(r\) over \((0,r)\) at fixed \(t\); the boundary term at
\(\rho=0\) vanishes since \(W(\rho)\sim\rho^{N-1}\to0\) and
\((\widetilde Z_i)_r\) is bounded by
Proposition~\ref{prop:endpoint-schauder-bootstrap}, giving
\[
        W(r)A(r)(\widetilde Z_i)_r(r,t)
        =-\int_0^rW(\rho)f_i(\rho,t)\dd\rho
        +\partial_t\!\int_0^rW(\rho)\bigl[\mathfrak F_i-\beta(\widetilde Z)(\widetilde Z_i)_t\bigr](\rho,t)\dd\rho .
\]
The first integrand is \(O(\rho^{N-1})\). For the second term, differentiate
under the integral; the differentiated integrand is
\[
        W(\rho)\,\partial_t\!\left[
        \mathfrak F_i-\beta(\widetilde Z)(\widetilde Z_i)_t
        \right](\rho,t),
\]
which is also \(O(\rho^{N-1})\), since
Proposition~\ref{prop:endpoint-schauder-bootstrap} bounds
\(\partial_t^k\widetilde Z\) for \(k\le i+2\). Both right-hand terms are therefore
\(O(r^N)\). Dividing by \(W(r)A(r)\sim r^{N-1}\) gives \((\widetilde Z_i)_r(r,t)=O(r)\),
and so
\[
        \widetilde Z_i(r,t)-\widetilde Z_i(0,t)
        =\int_0^r(\widetilde Z_i)_r(\rho,t)\dd\rho
        =O(r^2) .
\]
With \(r=2\sqrt y\), this is \(O(y)\). Moreover
\begin{equation}
\label{eq:Vi-bounded}
        V_i(y,t):=\partial_y\partial_t^iZ(y,t)
        =\frac2r(\widetilde Z_i)_r(r,t)=O(1),
\end{equation}
so \(\partial_t^iZ\) is Lipschitz in~\(y\); it is Lipschitz in~\(t\) as
well, since Proposition~\ref{prop:endpoint-schauder-bootstrap} bounds
\(\partial_t^{i+1}\widetilde Z\). Hence \(\partial_t^iZ\) is jointly Lipschitz on
\([0,r_0^2/4)\times I\), establishing \eqref{eq:tangential-z-bound}. The
smoothness of \(Z(0,\cdot)\) is immediate from
\(Z(0,t)=\widetilde Z(0,t)\in C^\infty_{\loc}(I)\), again by
Proposition~\ref{prop:endpoint-schauder-bootstrap}.

For \(0\le i\le2\ell-3\), differentiating \eqref{eq:reg-sing-ode} \(i\)
times in \(t\) shows that \(V_i\) satisfies
\begin{equation}
\label{eq:reg-sing-Vi}
        y(V_i)_y+b(y)V_i=F_i,
        \qquad F_i:=\partial_t^iF,
\end{equation}
with \(b\) and \(F\) as in \eqref{eq:reg-sing-coefs}. Since
\(c_\theta\), \(h(y)\), and \(p_0''(y)\) are independent of \(t\),
\begin{equation}
\label{eq:Fi-explicit}
        F_i=-\frac{\partial_t^{i+1}(\beta(Z)Z_t)
        +p_0''(y)\,\partial_t^iZ}{c_\theta h(y)} ,
\end{equation}
which is a polynomial in \(Z\) and its tangential derivatives
\(\partial_t^kZ\) for \(k\le i+2\), times \(y\)-coefficients built from
\(1/h(y)\) and \(p_0''(y)\). Since \(p_0\in C^{\ell,\sigma}\) and
\(h\in C^{\ell-1,\sigma}\) with \(h\ge h_0>0\), these \(y\)-coefficients
lie in \(C^{\ell-2,\sigma}([0,r_0^2/4))\); the principal coefficient
\(b\) in \eqref{eq:reg-sing-coefs} is a quotient of \(p_0'\) and \(h\), both
in \(C^{\ell-1,\sigma}\), and so itself lies in
\(C^{\ell-1,\sigma}([0,r_0^2/4))\). After decreasing \(r_0\) if
necessary, \(b\ge b_0>0\) on the endpoint interval, and the binding
regularity constraint for
Lemma~\ref{lem:regular-singular-descent} comes from \(F_i\) rather than
from \(b\).
For \(n=0\), \eqref{eq:tangential-z-bound} bounds the tangential factors
of \(F_i\) in \eqref{eq:Fi-explicit} (with \(k\le i+2\le2\ell-1\)),
placing \(F_i\) in \(C^{0,\sigma}_{\loc}([0,r_0^2/4)\times I)\); and
\(V_i\) is bounded by \eqref{eq:Vi-bounded}. Lemma
\ref{lem:regular-singular-descent} therefore gives
\begin{equation}
\label{eq:Vi-base}
        V_i,\ y(V_i)_y\in C^{0,\sigma}_{\loc}([0,r_0^2/4)\times I),
        \qquad i\le2\ell-3 .
\end{equation}
For higher normal derivatives, the strategy is to apply Lemma
\ref{lem:regular-singular-descent} to \eqref{eq:reg-sing-Vi} at order
\(n\), which requires \(F_i\in C^{n,\sigma}_{\loc}([0,r_0^2/4)\times I)\).
We prove the resulting estimate
\begin{equation}
\label{eq:Vi-n-form}
        \partial_y^nV_i,\ y\,\partial_y^{n+1}V_i
        \in C^{0,\sigma}_{\loc}([0,r_0^2/4)\times I),
        \qquad i+2(n+1)\le2\ell-1,
\end{equation}
by induction on \(n\). The 2:1 trade-off in the constraint reflects the
structure of \(F\) in \eqref{eq:reg-sing-coefs}: the term
\(\partial_t(\beta(Z)Z_t)\) makes \(F_i\) depend on tangential derivatives
of \(Z\) up to order \(i+2\), and each additional level of normal-descent
on \(V_i\) is obtained by applying
Lemma~\ref{lem:regular-singular-descent} at a tangential index higher by
\(2\)---so \(n\) levels of descent reach down to tangential order
\(i+2n\) at the bottom, requiring control of \(\partial_t^kZ\) for
\(k\le i+2(n+1)\). The cap \(2\ell-1\) is exactly the
tangential range of \eqref{eq:tangential-z-bound}.

The hypothesis at step \(n\) is that \eqref{eq:Vi-n-form} has already
been established at every earlier order \(n'<n\), for all tangential
indices \(i'\) with \(i'+2(n'+1)\le2\ell-1\); the base case \(n=0\) is
\eqref{eq:Vi-base}.

For the inductive step, it suffices to verify that
\(F_i\in C^{n,\sigma}_{\loc}\): granting this,
Lemma~\ref{lem:regular-singular-descent} applied to
\eqref{eq:reg-sing-Vi} at order \(n\) yields \eqref{eq:Vi-n-form}; the
lemma's hypothesis on the principal coefficient is met by the
\(C^{\ell-1,\sigma}\)-regularity of \(b\) noted after
\eqref{eq:Fi-explicit}. The inductive constraint
\(i+2(n+1)\le2\ell-1\) forces \(n\le\ell-2\), which is exactly the
regularity available in the \(y\)-coefficients of
\eqref{eq:Fi-explicit}; hence those coefficients admit the required
\(n\) \(y\)-derivatives.

Membership in \(C^{n,\sigma}_{\loc}\) requires control of every mixed
derivative \(\partial_y^q\partial_t^rF_i\) with \(q+r\le n\). Because
\(c_\theta\), \(h\), and \(p_0''\) are independent of \(t\),
\eqref{eq:Fi-explicit} gives \(\partial_t^rF_i=F_{i+r}\), so the
problem reduces to estimating \(\partial_y^qF_{i+r}\) with
\(q+r\le n\). When \(q=0\) no normal derivative is involved, and the
tangential factors \(\partial_t^kZ\) of \(F_{i+r}\) are bounded by
\eqref{eq:tangential-z-bound}, since \(k\le i+r+2\le2\ell-1\). When
\(q\ge1\), applying the product rule to \eqref{eq:Fi-explicit} (with
\(i\) replaced by \(i+r\)) expands \(\partial_y^qF_{i+r}\) into a sum
of terms, each consisting of a \(y\)-derivative of a coefficient (each
loss covered by \(n\le\ell-2\)) times products of factors of the form
\(\partial_y^{q_a}\partial_t^{j_a}Z\) with \(q_a\le q\) and
\(j_a\le i+r+2\). If \(q_a=0\), such a factor is again controlled by
\eqref{eq:tangential-z-bound}. If \(q_a\ge1\), it equals
\(\partial_y^{q_a-1}V_{j_a}\), and the inductive hypothesis at
\((i',n')=(j_a,q_a-1)\) places it in \(C^{0,\sigma}_{\loc}\): one has
\(q_a-1<n\), and
\[
        j_a+2q_a\le i+r+2+2q\le i+2(n+1)\le2\ell-1,
\]
where the second inequality uses \(r+2q\le2n\), itself a consequence
of \(q+r\le n\). The extreme case \((q,r)=(n,0)\) saturates these
bounds: the leading \(Z\)-factor is then
\(\partial_y^n\partial_t^{i+2}Z=\partial_y^{n-1}V_{i+2}\), placed in
\(C^{0,\sigma}_{\loc}\) by the hypothesis at \((i+2,n-1)\) under the
borderline constraint \((i+2)+2((n-1)+1)=i+2(n+1)\le2\ell-1\); the
mixed cases \(r>0\) involve strictly fewer than \(n\) normal
derivatives and so rely only on earlier steps of the same triangular
induction. Hence \(F_i\in C^{n,\sigma}_{\loc}\), and Lemma
\ref{lem:regular-singular-descent} yields \eqref{eq:Vi-n-form}.

Setting \(j:=n+1\) in the unweighted half and \(j:=n+2\) in the weighted
half, \eqref{eq:Vi-n-form} translates to: \(\partial_y^j\partial_t^iZ
\in C^{0,\sigma}_{\loc}\) whenever \(j\ge1\) and \(i+2j\le2\ell-1\), and
\(y\,\partial_y^j\partial_t^iZ\in C^{0,\sigma}_{\loc}\) whenever
\(j\ge2\) and \(i+2j-2\le2\ell-1\). The \(C^{\ell,\sigma}\) regularity
below uses only the regime \(j+i\le\ell\); we record three sub-cases:
\begin{equation}
\label{eq:zji-bulk}
        \partial_y^j\partial_t^iZ\in C^{0,\sigma}_{\loc}([0,r_0^2/4)\times I),
        \qquad 1\le j,\quad j+i\le\ell-1,
\end{equation}
and, with at least one tangential derivative,
\begin{equation}
\label{eq:zji-edge}
        \partial_y^j\partial_t^iZ\in C^{0,\sigma}_{\loc}([0,r_0^2/4)\times I),
        \qquad 1\le j,\quad i\ge1,\quad j+i\le\ell,
\end{equation}
and
\begin{equation}
\label{eq:zji-top}
        y\,\partial_y^j\partial_t^iZ\in C^{0,\sigma}_{\loc}([0,r_0^2/4)\times I),
        \qquad 1\le j,\quad j+i=\ell .
\end{equation}
Since \(Z\) is bounded above and below by positive constants, smooth
composition transfers the bounds of \eqref{eq:tangential-z-bound} and
\eqref{eq:zji-bulk}--\eqref{eq:zji-top} to
\(\gamma_y=Z^{-1/(\theta+1)}\) and to
\(\gamma_y^{-\theta}=Z^{c_\theta}\). Hence
\eqref{eq:tangential-z-bound} and \eqref{eq:zji-bulk} give
\[
        \gamma(y,t)-\gamma_L(t)
        =\int_0^y \gamma_y(s,t)\dd s
        \in C^{\ell,\sigma}_{\loc}([0,r_0^2/4)\times I),
\]
and adding \eqref{eq:zji-edge} gives
\[
        \partial_t\!\left(\gamma(y,t)-\gamma_L(t)\right)
        =\int_0^y \gamma_{yt}(s,t)\dd s
        \in C^{\ell,\sigma}_{\loc}([0,r_0^2/4)\times I).
\]
Indeed, after one \(y\)-derivative the terms to control are
\(\partial_y^j\partial_t^i\gamma_y\) with \(j+i\le\ell\) and \(i\ge1\), while
the terms without \(y\)-differentiation involve only pure tangential
derivatives of \(\gamma_y\) up to order \(\ell+1\le2\ell-1\).
For the pressure trace, \eqref{eq:mass-relation} gives
\(m(\gamma(y,t),t)^\theta=p_0(y)\gamma_y(y,t)^{-\theta}\).
Since \(p_0\in C^{\ell,\sigma}\), for derivatives of order at most
\(\ell-1\) this product is \(C^{0,\sigma}\) by
\eqref{eq:tangential-z-bound} and \eqref{eq:zji-bulk}. For derivatives
of order \(\ell\), the only terms not already controlled are those in
which all \(y\)-derivatives fall on \(\gamma_y^{-\theta}\); there
\(p_0(y)=y\,h(y)\) absorbs the factor
\(y\,\partial_y^j\partial_t^i(\gamma_y^{-\theta})\), \(j+i=\ell\),
bounded by \eqref{eq:zji-top}. Thus the pressure trace is
\(C^{\ell,\sigma}\). The proof at \(b\) is identical after replacing
\(y\) by \(b-y\).
\end{proof}

\section{Proof of the main theorem}
\label{sec:proof-main-theorem}

\begin{proof}[Proof of Theorem \ref{thm:higher-boundary-regularity}]
Fix a compact interval \(J\subset(0,T)\), and choose an open
interval \(I\Subset(0,T)\) with \(J\Subset I\). Throughout the proof we work
at the left endpoint \(a=0\); the analogous statements at \(b\) follow by
symmetry, after replacing \(y\) by \(b-y\).

At \(0\), the weak formulation \eqref{eq:radial-weak} is the model
equation \eqref{eq:radial-model} with \(B(r,t)=\beta(\widetilde Z(r,t))\). By
\eqref{eq:qy-bounds} and the definition \eqref{eq:Z-def}, \(\beta(Z)\) is
bounded above and below by positive constants; together with
\eqref{eq:radial-structure} this gives \eqref{ass:radial-weight} and
\eqref{ass:radial-coefficients}. Lemma \ref{lem:radial-regularity} and
Proposition \ref{prop:endpoint-schauder-bootstrap} then give
\[
        \widetilde Z(r,t)=Z(r^2/4,t)\in C^{1,\sigma}_{\loc}([0,r_0)\times I),
        \qquad
        \widetilde Z(0,\cdot)\in C^\infty_{\loc}(I).
\]
Since \(\widetilde Z(r,t)=Z(r^2/4,t)\) is even in \(r\), the bound
\(\widetilde Z\in C^{1,\sigma}_{\loc}\) forces \(\widetilde Z_r(0,t)=0\); applying
Proposition~\ref{prop:endpoint-schauder-bootstrap} to \(\partial_t\widetilde Z\)
likewise gives \(\partial_t\widetilde Z\in C^{1,\sigma}_{\loc}\) with
\((\partial_t\widetilde Z)_r(0,t)=0\). Set
\begin{equation}
\label{eq:Gamma-def}
        \Gamma(r,t):=\widetilde Z(r,t)^{-1/(\theta+1)}=\gamma_y(r^2/4,t),
\end{equation}
so that
\(p_0(r^2/4)\,\Gamma(r,t)^{-\theta}=m(\gamma(r^2/4,t),t)^\theta\) by
\eqref{eq:mass-relation}. Since \(\widetilde Z\) is bounded above and below by
positive constants, smooth composition gives
\begin{equation}
\label{eq:Gamma-regularity}
        \Gamma\in C^{1,\sigma}_{\loc},\qquad
        \Gamma_r(0,\cdot)=(\partial_t\Gamma)_r(0,\cdot)=0;
\end{equation}
\(\Gamma\) is then also bounded above and below by positive constants,
so the same regularity and trace conditions hold for any smooth
function of \(\Gamma\), in particular for \(\Gamma^{-\theta}\). We will repeatedly use the following elementary facts,
in which \(F,f\) denote functions of \((r,t)\) and \(y\in[0,r_0^2/4)\):
\begin{enumerate}
\item[(i)] If \(F\in C^{1,\sigma}\) and \(F_r(0,t)=0\), then
\(F(2\sqrt y,t)\in C^{0,\sigma}\).
\item[(ii)] If \(f\in C^{0,\sigma}\) and \(f(0,t)=0\), then
\(\sqrt y\,f(2\sqrt y,t)\in C^{0,\sigma}\).
\item[(iii)] If \(f\in C^{0,\sigma}\), then
\(y\,f(2\sqrt y,t)\in C^{0,\sigma}\).
\end{enumerate}
Each follows from a direct estimate on \(y\)-variations:
\(|F(2\sqrt y,t)-F(0,t)|\le Cy^{(1+\sigma)/2}\) for (i), since
\(F_r(0,t)=0\) and \(F_r\in C^\sigma\); \(|\sqrt y\,f(2\sqrt y,t)|\le
Cy^{(1+\sigma)/2}\) for (ii), since \(f(0,t)=0\); and \(|y\,f(2\sqrt
y,t)|\le Cy\) for (iii). The corresponding bounds on
\(|y^p-(y')^p|\le|y-y'|^p\) for \(p\in(0,1]\), together with the smoothness
of the substitution away from \(y=0\) and the obvious estimates in \(t\),
give joint H\"older continuity in \((y,t)\).
By \eqref{eq:Gamma-def}, \(\gamma_y(y,t)\to\Gamma(0,t)\) locally
uniformly as \(y\downarrow 0\); set \(\gamma_y(0^+,t):=\Gamma(0,t)\), so that
\(\gamma_y(0^+,\cdot)\in C^\infty_{\loc}(I)\).

For \(0<\rho<\delta\), set
\[
        G_\rho(t):=\frac1\rho\int_0^\rho\gamma(y,t)\dd y .
\]
Then \(G_\rho\to\gamma_L\) uniformly on \(I\). By \eqref{eq:gamma-Z}
and integration by parts in \(y\), using \(p_0(0)=0\),
\[
        G_\rho''
        =
        (1-c_\theta)\frac1\rho\int_0^\rho p_0'Z\dd y
        +c_\theta\frac{p_0(\rho)}{\rho}Z(\rho,\cdot).
\]
By Lemma \ref{lem:h-boundary} and the trace at \(0\), the right side converges
uniformly on \(I\) to
\[
        p_0'(0^+)\gamma_y(0^+,\cdot)^{-(\theta+1)} .
\]
Thus
\begin{equation}
\label{eq:left-acceleration}
        \gamma_L''=p_0'(0^+)\gamma_y(0^+,\cdot)^{-(\theta+1)}
\end{equation}
distributionally. Since the right side is \(C^\infty_{\loc}(I)\),
\(\gamma_L\in C^\infty_{\loc}(I)\).

In the positive phase, \eqref{eq:mass-relation} and \eqref{eq:Z-def} give
\[
        m(\gamma(y,t),t)^\theta=p_0(y)\gamma_y(y,t)^{-\theta}
        =p_0(y)Z(y,t)^{c_\theta}.
\]
For \(0<y<b\),
\begin{equation}
\label{eq:pressure-gradient}
        \partial_x(m^\theta)(\gamma(y,t),t)
        =p_0'Z+c_\theta p_0Z_y
        =\gamma_{tt}(y,t).
\end{equation}
The right side is bounded on compact subintervals of \((0,b)\times I\). Near
\(0\), write \(y=r^2/4\). Proposition
\ref{prop:endpoint-schauder-bootstrap} gives \(\widetilde Z_r\in L^\infty\)
and \(p_0(r^2/4)=O(r^2)\) by Lemma~\ref{lem:h-boundary}, so
\[
        p_0Z_y=p_0\!\left(\frac{r^2}{4}\right)\frac{2}{r}\widetilde Z_r=O(r).
\]
Thus \(\gamma_{tt}\) is bounded near \(0\), uniformly for \(t\in I\).
Since \(m\) is continuous and \(m^\theta=0\) outside the support,
\(m(\cdot,t)^\theta\) has bounded weak \(x\)-derivative on \(\R\), given
by \eqref{eq:pressure-gradient} in the positive phase and by \(0\)
elsewhere, so
\(\sup_{t\in I}\|m(\cdot,t)^\theta\|_{W^{1,\infty}(\R)}<\infty\). The
matching time bound, completing
\(p\in W^{1,\infty}_{\loc}(\R\times(0,T))\), is obtained at the end of
the proof from the chart \(P\) constructed below.

Near \(0\), by \eqref{eq:Gamma-def},
\[
        \bar{p}(y,t):=m(\gamma(y,t),t)^\theta
        =p_0(y)\,\Gamma(2\sqrt y,t)^{-\theta}
\]
is \(C^{1,\sigma}\). Indeed, by the product and chain rules,
\begin{equation}
\label{eq:p-chain-rule}
        \partial_y \bar{p}
        =p_0'(y)\,\Gamma(2\sqrt y,t)^{-\theta}
        +p_0(y)\,\frac{(\Gamma^{-\theta})_r(2\sqrt y,t)}{\sqrt y},
        \qquad
        \partial_t\bar{p}=p_0(y)\,(\Gamma^{-\theta})_t(2\sqrt y,t).
\end{equation}
With \(h\) as in \eqref{eq:h-def}, the second term in
\(\partial_y\bar{p}\) factors as
\[
        p_0(y)\frac{(\Gamma^{-\theta})_r(2\sqrt y,t)}{\sqrt y}
        =\sqrt y\,h(y)\,(\Gamma^{-\theta})_r(2\sqrt y,t);
\]
using \eqref{eq:Gamma-regularity} (which gives
\((\Gamma^{-\theta})_t,(\Gamma^{-\theta})_r\in C^{0,\sigma}_{\loc}\)),
apply (i) to \(F=\Gamma^{-\theta}\), (iii) to
\(f=(\Gamma^{-\theta})_t\), and (ii) to \(f=(\Gamma^{-\theta})_r\):
\[
        \Gamma(2\sqrt y,t)^{-\theta},\qquad
        p_0(y)(\Gamma^{-\theta})_t(2\sqrt y,t),\qquad
        \sqrt y\,h(y)(\Gamma^{-\theta})_r(2\sqrt y,t)
\]
all belong to \(C^{0,\sigma}\). Evaluating
\eqref{eq:p-chain-rule} at \(y=0\) and using \(p_0(0)=0\),
\[
        \partial_y \bar{p}(0,t)=p_0'(0^+)\Gamma(0,t)^{-\theta},
        \qquad
        \partial_t\bar{p}(0,t)=0 .
\]
Set
\[
        \widetilde\Gamma(r,t):=2\int_0^1 s\Gamma(sr,t)\dd s,\qquad
        \bar{\gamma}(y,t):=\gamma(y,t)-\gamma_L(t)=y\,\widetilde\Gamma(2\sqrt y,t).
\]
Then
\begin{equation}
\label{eq:gamma-regularity}
        \bar{\gamma}\in C^{1,\sigma}_{\loc},\qquad
        \partial_y \bar{\gamma}(y,t)=\Gamma(2\sqrt y,t),\qquad
        \partial_y \bar{\gamma}(0,t)=\Gamma(0,t)>0 .
\end{equation}
After shrinking \(\delta\), let \(Y(s,t)\) be defined by
\begin{equation}
\label{eq:Y-def}
        \bar{\gamma}(Y(s,t),t)=s,
\end{equation}
and set
\[
        P(s,t):=\bar{p}(Y(s,t),t).
\]
Since \(\partial_y \bar{\gamma}(0,\cdot)=\Gamma(0,\cdot)>0\), \(\bar{\gamma}\) admits a
\(C^{1,\sigma}\) extension across \(y=0\) to a two-sided neighborhood of
\(\{0\}\times I\) (by Whitney extension); the standard inverse function
theorem applied to \eqref{eq:Y-def} via this extension, and restricted to
\(s\ge0\), yields
\begin{equation}
\label{eq:YP-regularity}
        Y,P\in C^{1,\sigma}([0,\rho]\times I)\qquad\text{for some }\rho>0 .
\end{equation}
Since \(P(s,t)=\bar{p}(Y(s,t),t)=m(\gamma(Y(s,t),t),t)^\theta\), and since
\eqref{eq:Y-def} together with the definition of \(\bar{\gamma}\) gives
\(\gamma(Y(s,t),t)=\gamma_L(t)+s\),
\[
        p(x,t)=m(x,t)^\theta=P(x-\gamma_L(t),t)
        \quad\text{for } 0\le x-\gamma_L(t)\le\rho,\ t\in I .
\]
Together with \eqref{eq:YP-regularity}, this yields the one-sided
\(C^{1,\sigma}\) regularity of \(p\) up to the left free boundary.
The interior \(C^2\) regularity of
\(\gamma\) from Theorem \ref{thm:free-boundary-estimates}, combined with
interior Schauder estimates for the uniformly elliptic equation
\eqref{eq:flow}, then completes the proof of the
\(C^{1,\sigma}\) statement for \(p\).

Let \(y=y(x,t)\) denote the inverse of \(x=\gamma(y,t)\) in the positive
phase. Since \(u_x(\gamma(y,t),t)=-\gamma_t(y,t)\), the identities
\[
        (u_x)_x(\gamma(y,t),t)=-\frac{\gamma_{ty}(y,t)}{\gamma_y(y,t)},
        \qquad
        (u_x)_t(\gamma(y,t),t)
        =-\gamma_{tt}(y,t)+\frac{\gamma_t(y,t)\gamma_{ty}(y,t)}
        {\gamma_y(y,t)}
\]
hold in the positive phase. Differentiating
\(\gamma_y(y,t)=\Gamma(2\sqrt y,t)\) in \(t\) gives
\(\gamma_{yt}(y,t)=\Gamma_t(2\sqrt y,t)\); applying
Proposition~\ref{prop:endpoint-schauder-bootstrap} to \(\partial_t\widetilde Z\) places
\(\Gamma_t\) in \(C^{1,\sigma}_{\loc}\) with \(\Gamma_t\) even in \(r\), so (i) gives
\(\gamma_{yt}\in C^{0,\sigma}\) in \((y,t)\) up to \(y=0\). The substitution
\(y=Y(x-\gamma_L(t),t)\), \(C^{1,\sigma}\) (hence Lipschitz) in \((x,t)\) by
\eqref{eq:YP-regularity} and \(\gamma_L\in C^\infty\), then transfers
\(\gamma_{yt}\) (and likewise \(\gamma_y\)) to \(C^{0,\sigma}\) functions of
\((x,t)\) on the strip \(0\le x-\gamma_L(t)\le\rho\), \(t\in I\). Away from the endpoints, this follows from the interior
Schauder estimates for the uniformly elliptic equation \eqref{eq:flow}.
From \(\gamma_t(y,t)=\gamma_L'(t)+\partial_t\bar{\gamma}(y,t)\) and
\eqref{eq:gamma-regularity}, \(\gamma_t\in C^\sigma\), and
\(\gamma_{tt}=p_x(\gamma(y,t),t)\) belongs to \(C^\sigma\) by
\eqref{eq:pressure-gradient} and the \(C^{1,\sigma}\) regularity of \(p\) just
proved.
Therefore
\[
        u_x\in
        C^{1,\sigma}\bigl(\overline{\{m>0\}}\cap(\R\times J)\bigr).
\]
Since
\[
        u_t=\frac12u_x^2-p,
\]
and \(p\in C^{1,\sigma}\) on the same set, also
\[
        u_t\in
        C^{1,\sigma}\bigl(\overline{\{m>0\}}\cap(\R\times J)\bigr).
\]
Hence
\[
        u\in
        C^{2,\sigma}\bigl(\overline{\{m>0\}}\cap(\R\times J)\bigr).
\]
By the formula \(p(x,t)=P(x-\gamma_L(t),t)\) on
\([\gamma_L(t),\gamma_L(t)+\rho]\), with \(P\) as in \eqref{eq:YP-regularity}
and \(P(0,\cdot)=0\), the zero extension across the left free boundary belongs
to \(W^{1,\infty}\) in a full one-sided neighborhood. Indeed, for \(s=x-\gamma_L(t)>0\),
\[
        p_x=P_s(s,t),\qquad p_t=P_t(s,t)-\gamma_L'(t)P_s(s,t),
\]
while both weak derivatives vanish for \(s<0\); no boundary measure appears
because the trace of \(p\) at \(s=0\) is zero. The same argument applies at the
right free boundary. Together with the interior \(C^{1,\sigma}\) regularity,
\(p\in W^{1,\infty}_{\loc}(\R\times(0,T))\).

To upgrade this to regularity of \(u\), we apply \cite[Thm.~4.23]{CMP}:
local \(C^\beta\) regularity of the right-hand side of the
Hamilton--Jacobi equation gives \(C^{1,\beta/2}_{\loc}\) regularity of
\(u\). This conclusion fails for general viscosity solutions but holds
here because \(u\) is simultaneously a forward and backward viscosity
solution. The cited result is stated for \(\beta<1\), but its proof uses
only a local Lipschitz bound on the right-hand side and so applies
verbatim at \(\beta=1\). Hence \(p\in W^{1,\infty}_{\loc}\) yields
\(u\in C^{1,1/2}_{\loc}(\R\times(0,T))\).

Assume now that \(p_0\in C^{\ell,\sigma}([0,b])\), with one-sided derivatives at the endpoints.
Proposition \ref{prop:finite-endpoint-regularity} gives
\[
        \bar{p}(y,t):=m(\gamma(y,t),t)^\theta
\]
in \(C^{\ell,\sigma}\) on a one-sided Lagrangian endpoint chart, and
\[
        \bar{\gamma}(y,t):=\gamma(y,t)-\gamma_L(t)
\]
belongs to \(C^{\ell,\sigma}\), with
\(\partial_t\bar{\gamma}\in C^{\ell,\sigma}\). Moreover
\(\partial_y \bar{\gamma}\) is bounded below by a positive constant.
After shrinking \(\delta\), the inverse map \(Y(s,t)\) defined by
\eqref{eq:Y-def} belongs to \(C^{\ell,\sigma}([0,\rho]\times I)\)
for some \(\rho>0\), by the standard inverse function theorem
applied to a \(C^{\ell,\sigma}\) extension of \(\bar{\gamma}\) across \(y=0\). Therefore
\[
        P(s,t):=\bar{p}(Y(s,t),t)
\]
belongs to \(C^{\ell,\sigma}([0,\rho]\times I)\). Proposition
\ref{prop:finite-endpoint-regularity} also gives
\(Z(0,\cdot)\in C^\infty_{\loc}(I)\); the acceleration formula
\eqref{eq:left-acceleration} then yields \(\gamma_L\in C^\infty_{\loc}(I)\).
Define
\[
        \Omega_L^\rho:=\{(x,t):t\in I,\ 0\le x-\gamma_L(t)\le\rho\}.
\]
Since
\[
        m(x,t)^\theta=P(x-\gamma_L(t),t)
        \quad\text{in }\Omega_L^\rho,
\]
the asserted one-sided \(C^{\ell,\sigma}\) regularity of \(m^\theta\) in physical
coordinates follows near the left moving free boundary. Away from the free
boundaries, \eqref{eq:flow} is uniformly elliptic. Bootstrapping the linear
equation with coefficients
\(\theta p_0\gamma_y^{-2-\theta}\) and right side
\(p_0'\gamma_y^{-1-\theta}\), starting from the \(C^2\) regularity of
\(\gamma\) in Theorem~\ref{thm:free-boundary-estimates}, gives
\(\gamma\in C^{\ell+1,\sigma}_{\loc}\), with \(p_0'\in
C^{\ell-1,\sigma}\) as the limiting datum. Hence the change of variables
\(x=\gamma(y,t)\) is \(C^{\ell+1,\sigma}\), while
\(\bar p=p_0\gamma_y^{-\theta}\in C^{\ell,\sigma}_{\loc}\), and this transfers
to \(m^\theta\in C^{\ell,\sigma}\). Restricting to the fixed interval
\(J\) gives
\[
        m^\theta\in
        C^{\ell,\sigma}\bigl(\overline{\{m>0\}}\cap(\R\times J)\bigr).
\]
The same charts give the regularity of the value function. Near the left
boundary,
\[
        u_x(\gamma_L(t)+s,t)
        =
        -\gamma_L'(t)-\partial_t\bar{\gamma}(Y(s,t),t).
\]
The key point is that Proposition~\ref{prop:finite-endpoint-regularity}
yields the stronger conclusion \(\partial_t\bar{\gamma}\in C^{\ell,\sigma}\),
and not merely \(\bar{\gamma}\in C^{\ell,\sigma}\). Together with
\(Y\in C^{\ell,\sigma}\) and \(\gamma_L\in C^\infty_{\loc}(I)\), this yields
\(u_x\in C^{\ell,\sigma}\) up to the free boundary from the positive phase. Away from
the free boundaries, the interior regularity
\(\gamma\in C^{\ell+1,\sigma}_{\loc}\) established above gives
\(\gamma_t\in C^{\ell,\sigma}_{\loc}\), and the change of variables
\(x=\gamma(y,t)\) transfers \(u_x=-\gamma_t\circ y\) to \(C^{\ell,\sigma}\)
interiorly. The
Hamilton--Jacobi equation gives
\[
        u_t=\frac12u_x^2-m^\theta\in C^{\ell,\sigma}.
\]
Therefore
\[
        u\in
        C^{\ell+1,\sigma}\bigl(\overline{\{m>0\}}\cap(\R\times J)\bigr).
\]
Since the compact \(J\subset(0,T)\) was arbitrary, the finite-regularity
statement follows.
\end{proof}

\end{document}